\documentclass[12pt]{amsart}
\usepackage[nobysame,abbrev,alphabetic]{amsrefs}
\usepackage{amssymb}

\DeclareMathOperator{\Hom}{Hom}
\DeclareMathOperator{\Lyn}{Lyn}
\DeclareMathOperator{\Mag}{Mag}
\DeclareMathOperator{\res}{res}
\DeclareMathOperator{\Supp}{Supp}

\newcommand{\Bock}{\mathrm{Bock}}
\newcommand{\nek}{,\ldots,}
\newcommand{\inv}{^{-1}}
\newcommand{\isom}{\cong}

\newcommand{\trg}{\mathrm{trg}}
\newcommand{\phm}{\phantom{-}}

\newtheorem{thm}{Theorem}[section]
\newtheorem{cor}[thm]{Corollary}

\newtheorem{prop}[thm]{Proposition}

\newtheorem{exam}[thm]{Example}
\newtheorem{rem}[thm]{Remark}

\newtheorem*{thmA}{Theorem A}
\newtheorem*{thmB}{Theorem B}
\numberwithin{equation}{section}

\newcommand{\alp}{\alpha}
\newcommand{\eps}{\epsilon}

\newcommand{\sig}{\sigma}
\newcommand{\dbF}{\mathbb{F}}

\newcommand{\dbU}{\mathbb{U}}
\newcommand{\dbZ}{\mathbb{Z}}

\begin{document}

\title[Higher Labute--Serre duality]{Higher Labute--Serre duality and Lyndon words}

\author{Ido Efrat}
\address{Earl Katz Family Chair in Pure Mathematics\\
Department of Mathematics\\
Ben-Gurion University of the Negev\\
P.O.\ Box 653, Be'er-Sheva 8410501\\
Israel} \email{efrat@bgu.ac.il}

\author{Levav Ferber Tas}
\address{Department of Mathematics\\
Ben-Gurion University of the Negev\\
P.O.\ Box 653, Be'er-Sheva 8410501\\
Israel} \email{levavft@gmail.com}

\thanks{This work was supported by the Israel Science Foundation (grant No.\ 569/21). \\
The paper is based on the second author's M.Sc.\ thesis, carried out at Ben-Gurion University under the supervision of the first author.}

\keywords{Labute--Serre duality, Lyndon words, transgression pairing}

\subjclass[2020]{Primary 12G05, Secondary 20J06, 68R15}

\maketitle

\begin{abstract}
Let $S$ be a free profinite group on a finite ordered basis $X$, and let $S^{(n,p)}$, $n=1,2\nek$ denote its lower $p$-central filtration. 
There is a natural duality between $S^{(n,p)}/S^{(n+1,p)}$ and $H^2(S/S^{(n,p)},\dbF_p)$.
These $\dbF_p$-linear spaces admit natural bases indexed by Lyndon words of length $\leq n$ in the alphabet $X$. 
These bases are known to be unitriangularly dual. 
We prove that they are much closer to being fully dual, by showing that the pairing between two basis elements vanishes unless the corresponding Lyndon words are permutations of one another.
We further show that the value of the pairing is essentially independent of $n$.
\end{abstract}

\section{Introduction}
\subsection{The main results}
Let $p$ be a fixed prime number.
Let $S=S_X$ be the free profinite group on a nonempty finite set $X$.
The \textsl{lower $p$-central filtration} $S^{(n,p)}$, $n=1,2\nek$ of $S$ is defined inductively by $S^{(1,p)}=S$ and $S^{(n+1,p)}=(S^{(n,p)})^p[S,S^{(n,p)}]$.
Thus $S^{(n+1,p)}$ is the closed subgroup of $S$ generated by all elements $\tau^p$ and $[\sig,\tau]=\sig\inv \tau\inv \sig\tau$ with $\sig\in S$ and $\tau\in S^{(n,p)}$.

We assume from now on that $n\geq2$.
The quotient $S^{(n,p)}/S^{(n+1,p)}$ and the cohomology group $H^2(S/S^{(n,p)},\dbF_p)$
are $\dbF_p$-linear spaces.
They are related by a canonical non-degenerate bilinear map
\[
(\cdot,\cdot)_n\colon S^{(n,p)}/S^{(n+1,p)}\times H^2(S/S^{(n,p)},\dbF_p)\to\dbF_p,
\]
called the \textsl{transgression pairing}, obtained in a natural way from the substitution pairing $S^{(n,p)}\times\Hom_{\rm cont}(S^{(n,p)},\dbF_p)\to\dbF_p$ 
 -- see \S\ref{section on the transgression pairing} for more details.

In \cite{Efrat17}, the first author constructs canonical bases $(\bar\sig_{w,n})_w$ and $(\alp_{w,n})_w$, respectively, for these two linear spaces, which are \textsl{unitriangularly dual} with respect to $(\cdot,\cdot)_n$,
that is, the matrix $[(\bar\sig_{w,n},\alp_{w',n})_n]_{w,w'}$ is unipotent upper triangular.
We call it the \textsl{fundamental matrix of depth $n$} for the pairing.

The construction of the linear bases uses tools from the \textsl{combinatorics of words}.
Specifically, one fixes a total order on $X$ and considers $X$ also as an alphabet.
Then we let $w$ range over all words $w=(x_1\cdots x_s)$ of length $1\leq s\leq n$ in this alphabet, which are \textsl{Lyndon words}, that is, $w$ is lexicographically smaller than all its proper suffixes.
We order these words length-lexicographically.
Every Lyndon word has a \textsl{standard factorization} as a concatenation of shorter Lyndon words.
Iterating this process down to the level of letters in $X$, this gives rise to 
an iterated commutator $\tau_w$ of depth $s$.
We then take $\sig_{w,n}=\tau_w^{p^{n-s}}$ (See \S\ref{section on power series} below).

The construction of the cohomology elements $\alp_{w,n}$ is more subtle and is based on the \textsl{Magnus representation} of $S$ in the group of unipotent upper triangular matrices of size $(s+1)\times(s+1)$ over the ring $\dbZ/p^{n-s+1}$.
We refer to \S\ref{section on the Magnus homomorphism} for the precise definition and only mention at this point that in the extreme cases $s=1$ and $s=n$ the basis elements $\alp_{w,n}$ are certain natural Bockstein elements or, resp., belong to a Massey product related to $w$.

Although the above bases are not dual for any $n\geq3$, in the current paper we show that they are much closer to being so than previously thought, in the sense that the failure of duality is confined to permutation classes:

\begin{thmA}
For Lyndon words $w,w'$ of length $\leq n$ in $X$ which are not permutations of one another we have $(\sig_{w,n},\alp_{w',n})_n=0$.
\end{thmA}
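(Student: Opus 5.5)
We reduce the pairing to a computation inside the Magnus representation and then use a multidegree (content) grading.

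First we recall the explicit formula for the pairing from \cite{Efrat17}. Let $w'=(x_1\cdots x_{s'})$. The class $\alp_{w',n}$ is the transgression of a homomorphism on $S^{(n,p)}$ with values in $\dbF_p$. Concretely, we take the Magnus homomorphism $\rho_{w'}\colon S\to\dbU_{s'+1}(\dbZ/p^{n-s'+1})$ attached to $w'$, and restrict it to $S^{(n,p)}$. There its image is central: it is concentrated in the upper-right corner $(1,s'+1)$ and lies in $p^{n-s'}\dbZ/p^{n-s'+1}\isom\dbF_p$. Hence for $\sig\in S^{(n,p)}$,
\[
(\bar\sig,\alp_{w',n})_n=\eps_{w',n}(\sig),
\]
where $\eps_{w',n}(\sig)$ is the coefficient of the monomial $X_{x_1}\cdots X_{x_{s'}}$ in the Magnus power series $\Lambda(\sig)=1+\sum_u \eps_{u}(\sig)X_u$, reduced modulo $p^{n-s'+1}$ and divided by $p^{n-s'}$. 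The possible sign conventions do not affect vanishing. So it suffices to show that the coefficient of $X_{w'}$ in $\Lambda(\tau_w^{p^{n-s}})$ vanishes modulo $p^{n-s'+1}$ whenever $w'$ is not a permutation of $w$.

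Next we introduce the multidegree $\mathbb{N}^X$-grading on $\dbZ\langle\langle X\rangle\rangle$. We show by induction along the standard factorization that $\Lambda(\tau_w)=1+P_w+(\text{terms of higher total degree})$. Here $P_w$ is the Lie polynomial of $w$, which is homogeneous of multidegree $c(w)$, the content of $w$. More precisely, every monomial occurring in $\Lambda(\tau_w)-1$ has multidegree $\geq c(w)$ coordinatewise, and hence is a nonzero multiple of $c(w)$ in the additive sense. Indeed, for a commutator $[\sig,\tau]$ with $\Lambda(\sig)=1+a$ and $\Lambda(\tau)=1+b$, the expansion $\sig\inv\tau\inv(\sig\tau-\tau\sig)$ shows that every term of $\Lambda([\sig,\tau])-1$ contains at least one factor from $a$ and one from $b$. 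The content lower bound therefore propagates.

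The finer claim we need is stronger: every monomial in $\Lambda(\tau_w)-1$ has content equal to a positive integer multiple of $c(w)$, up to the extra letters contributed by the inverses. Since pure content bounds may fail here, we instead argue directly with powers:
\[
\Lambda(\tau_w^{p^{n-s}})=(1+Q)^{p^{n-s}}=\sum_k\binom{p^{n-s}}{k}Q^k,
\]
where $Q=\Lambda(\tau_w)-1$ has all monomials of length $\geq s$ and containing every letter of $w$ with at least its multiplicity.

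We then split into cases according to $s'$. If $s'<s$, then $X_{w'}$ does not occur at all, since $Q$ has no monomials of length less than $s$. If $s'\geq s$ and the coefficient of $X_{w'}$ in $Q^k$ is nonzero, then $k\leq s'/s$. We also use $v_p\binom{p^{n-s}}{k}\geq n-s-v_p(k)$. The contribution then vanishes modulo $p^{n-s'+1}$ when $n-s-v_p(k)\geq n-s'+1$, that is, when $v_p(k)\leq s'-s-1$. Since $p^{v_p(k)}\leq k\leq s'/s$, we get $v_p(k)<s'-s$ whenever $s'>s$, using $p^{m}\leq s'/s\Rightarrow m< s'-s$. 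Thus only $s'=s$ remains. In that case only $k=1$ contributes, and $X_{w'}$ must occur in $Q$ itself. A monomial of length exactly $s$ in $Q$ containing $c(w)$ has content exactly $c(w)$, so $w'$ is a permutation of $w$.

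The main obstacle is the case $s'>s$. There the vanishing depends on the interplay between the binomial valuations and the length bound $k\leq s'/s$. The higher-order terms of $Q$ need care, since they may have content not a multiple of $c(w)$. We therefore rely only on the length and valuation argument, and verify carefully that $p^{m}\leq s'/s$ forces $m\leq s'-s-1$. This holds because $s'/s\leq s'-s+1\leq 2^{s'-s}$ for $s\geq1$ and $s'>s$, and here $m\geq0$ with $p^m\leq 2^{s'-s}$ forcing $m< s'-s$ unless $s'-s=m=0$.
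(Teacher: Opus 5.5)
Your strategy is essentially the paper's. You reduce to a Magnus coefficient via Theorem~\ref{connection between the pairings} and split into the cases $s'<s$, $s'=s$, $s'>s$. For $s'=s$ you use the leading term of $\Lambda(\tau_w)-1$; your coordinatewise content bound is a fine substitute for $P_w$ and Proposition~\ref{permutations}(a). For $s'>s$ you use binomial valuations.

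However, the arithmetic that closes the case $s'>s$ is wrong. From $p^m\le 2^{s'-s}$ you can only conclude $m\le s'-s$, and equality really occurs. Take $p=2$, $s=1$, $s'=2$, $k=2$. Then $k\le s'/s$ and $v_2(k)=1=s'-s$, so $\binom{2^{n-1}}{2}Q^2$ has $2$-adic valuation exactly $n-2=n-s'$ and need not vanish modulo $2^{n-1}$. Following your chain of inequalities, this is the only failure. It is exactly the exception to the inequality $|w'|<p^{|w'|-|w|}|w|$ in Proposition~\ref{zero pairing for distinct lengths}.

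In fact the intermediate statement you set out to prove is false. That statement was that the coefficient vanishes for every $w'$ that is not a permutation of $w$. For $w=(x)$ and $w'=(xx)$, the coefficient of $xx$ in $(1+x)^{2^{n-1}}$ is $\binom{2^{n-1}}{2}\equiv-2^{n-2}\pmod{2^{n-1}}$, so $\langle (x),(xx)\rangle_n=-1\neq0$.

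The missing ingredient is the hypothesis that $w'$ is Lyndon, which your argument never uses. In the exceptional case $\tau_w=x$, so $Q=x$ exactly and $Q^2=xx$. Since $(xx)$ is not a Lyndon word, a Lyndon $w'$ of length $2$ receives no contribution from $k=2$, and the rest of your argument then goes through. This is precisely how the paper handles it, in Proposition~\ref{zero pairing for distinct lengths} and Corollary~\ref{cor on vanishing of pairing}.

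As a minor point, drop the assertion that every monomial of $\Lambda(\tau_w)-1$ has content a multiple of $c(w)$. It is false: $xxy$ occurs with coefficient $-1$ in $\Lambda([x,y])-1$. It is also unnecessary, since only the coordinatewise lower bound is used.
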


Therefore, if we group the Lyndon words in $X$ of length $\leq n$ according to their permutation classes, and order each class lexicographically, then the revised fundamental matrix becomes a block diagonal matrix in which the diagonal blocks are unipotent upper triangular.
Moreover, we show:

\begin{thmB}
The matrix block associated with a permutation class of a Lyndon word of length $s$ is independent of $n$, as long as $s\leq n$.
\end{thmB}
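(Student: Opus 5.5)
The plan is to reduce Theorem B to a formula for the entries $(\bar\sig_{w,n},\alp_{w',n})_n$ in terms of the Magnus expansion, and then check that this formula does not involve $n$. By Theorem A only pairs $w,w'$ of the same length $s$ that are permutations of one another matter. Recall the description of the transgression pairing, to be recalled in \S\ref{section on the transgression pairing}. The element $\alp_{w',n}$ is the image under transgression of a homomorphism $\psi\colon S^{(n,p)}\to\dbF_p$. So the pairing value is $\psi(\sig_{w,n})$, where $\sig_{w,n}=\tau_w^{p^{n-s}}$. For $\alp_{w',n}$ coming from the Magnus representation into $\dbU_{s+1}(\dbZ/p^{n-s+1})$, I expect $\psi$ to be the upper-right corner entry of the Magnus matrix, divided by $p^{n-s}$ and reduced mod $p$. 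Concretely, the corner entry restricted to $S^{(n,p)}$ takes values in $p^{n-s}\dbZ/p^{n-s+1}\isom\dbF_p$. Equivalently, it is the coefficient $\eps_{w'}$ of the word $w'$ in the Magnus power series, taken in $p^{n-s}\dbZ/p^{n-s+1}$.

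The first key step is to compute the Magnus image of $\tau_w^{p^{n-s}}$ modulo words of length $>s$. Since $\tau_w$ is an iterated commutator of depth $s$, its Magnus series is $1+P_w+(\text{terms of degree}>s)$. Here $P_w$ is the homogeneous degree-$s$ Lie polynomial obtained by bracketing along the standard factorization, with integer coefficients. Raising to the power $p^{n-s}$ gives $1+p^{n-s}P_w+(\text{higher-degree terms})$. The binomial terms $\binom{p^{n-s}}{k}P_w^k$ with $k\ge2$ have degree $\ge2s>s$, so they do not affect the coefficient of a length-$s$ word. Hence the coefficient of $w'$ in $\sig_{w,n}$ is $p^{n-s}\cdot c_{w,w'}$, where $c_{w,w'}\in\dbZ$ is the coefficient of $w'$ in $P_w$. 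After dividing by $p^{n-s}$, the pairing equals $c_{w,w'}\bmod p$ (possibly up to a fixed sign convention), and this is visibly independent of $n$. As a consistency check, $c_{w,w'}=0$ unless $w'$ is a permutation of $w$, which is just Theorem A again.

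The main obstacle is pinning down the identification in the first paragraph precisely. I must check that the homomorphism $S^{(n,p)}\to\dbF_p$ whose transgression is $\alp_{w',n}$ really is the rescaled corner coefficient. In particular, the normalization must not introduce an $n$-dependent unit factor (for instance from Bockstein-type identifications $p^{n-s}\dbZ/p^{n-s+1}\isom\dbF_p$). I must also check that no lower-degree contributions to $\eps_{w'}$ survive. To handle the latter, I would use that on $S^{(n,p)}$ all Magnus coefficients $\eps_u$ of words $u$ with $|u|=t$ are divisible by $p^{n-t}$. This is the standard description of the $p$-Zassenhaus/lower $p$-central filtration via the Magnus map, and it should be available from \S\ref{section on the Magnus homomorphism}. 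It shows that the corner entry mod $p^{n-s+1}$ is additive on $S^{(n,p)}$ and depends only on the degree-$s$ part computed above. Once this identification is in place, the remaining computation is routine.
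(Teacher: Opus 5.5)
Your proposal is correct and follows the paper's route. The identification you flag as the main obstacle is exactly Theorem \ref{connection between the pairings}, which the paper imports from \cite{Efrat17}*{Cor.\ 8.2} rather than reproving; it holds exactly with the normalization $\iota_{n,s}$ and no extra sign. Your computation $\Mag_R(\tau_w^{p^{n-s}})=1+p^{n-s}P_w+O(s+1)$, giving the value $(P_w)_{w'}\bmod p$, is Proposition \ref{Mag of powers of tau} together with the equal-length case of Theorem \ref{pairing as a coefficient}. One small correction: the divisibility fact you invoke is Koch's characterization of the lower $p$-central filtration (Proposition \ref{lower p-central filtration via Magnus coefficients}), not the $p$-Zassenhaus one, though this does not affect the argument.
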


In the last section (\S\ref{section on the fundamental matrix of depth 4}) we compute these blocks explicitly for $n\leq4$.

\subsection{History and related works}
When $n=2$, the pairing $(\cdot,\cdot)_2$ has been explicitly described by Labute \cite{Labute 67} in his foundational work on the structure of pro-$p$ Demushkin groups,  following Serre \cite{SerreDemuskin}, whence the name \textsl{Labute--Serre duality}.
Here either $w=(x)$, with $x\in X$, or $w=(xy)$, with $x,y\in X$ satisfying $x<y$.
The permutation classes of Lyndon words are then singletons, so by Theorem A, the bases $\bar\sig_{w,2}$ and $\alp_{w,2}$ are in this case fully dual.

Further, when $n=2$ the basis elements take a particularly neat form:
The cosets of the elements $x$ of $X$ form a linear basis of $S/S^{(2,p)}$.
Also take the dual basis $\varphi_x$, $x\in X$, of $H^1(S/S^{(2,p)},\dbF_p)$. 
Then, on the profinite side, $\sig_{(x),2}=x^p$ and $\sig_{(xy),2}=[x,y]=x\inv y\inv xy$, whereas on the cohomology side, $\alp_{(x),2}$ is the Bockstein element $\Bock(\varphi_x)$ (See Example \ref{Bocksteins and Massey products}(a)) and $\alp_{(xy),2}$ is the cup product $\varphi_x\cup\varphi_y$.
We refer to \cite{NeukirchSchmidtWingberg}*{Ch.\ III, \S9} for a modern exposition of these facts.

The Labute--Serre duality in the case $n=2$ had numerous applications in the structure theory of profinite and pro-$p$ groups and in Galois cohomology.
See the introduction of \cite{Efrat17} and the references therein.
For more recent applications see, e.g., 
\cite{HamzaMaireMinacTan25}, \cite{MinacPasiniQuadrelliTan21}, \cite{MinacPasiniQuadrelliTan22},  \cite{MinacTan17}, \cite{Quadrelli24} (for $n=2$) or \cite{Efrat26} for all $n\geq2$.

A related filtration of $S$ is the \textsl{$p$-Zassenhaus filtration} $S_{(n,p)}$.
It is the slowest filtration by closed subgroups of $S$ satisfying $S_{(1,p)}=S$, $[S_{(n,p)},S_{(n',p)}]\subseteq S_{(n+n',p)}$, and $(S_{(n,p)})^p\subseteq S_{(pn,p)}$.
When $n=3$, Vogel \cite{VogelThesis} constructed for this filtration similar dual bases with depth-$3$ commutators and certain 3-fold Massey products.
His index set, however, differs from that of the Lyndon words.
For arbitrary $n\geq2$ and the $p$-Zassenhaus filtration, results analogous to those of \cite{Efrat17} were obtained in \cite{Efrat23}.
The subsequent work \cite{Efrat24} gives an axiomatic approach containing these two filtrations as special cases.
Analogs of Theorems A and B can also be obtained in these contexts.

\subsection{Proof idea}
To the two previous roles of $X$, as a basis of $S$ and an alphabet, we add a third role, as a set of non-commuting variables.
We then associate to the Lyndon word $w$ with its iterative standard factorization a Lie $\dbZ$-polynomial $P_w$ in $X$.
Using a triangulation property for the Magnus homomorphism applied to $\tau_w$, combined with precise $p$-adic evaluations of binomial coefficients, we show that $(\sig_{w,n},\alp_{w',n})_n$ is the $\dbF_p$-coefficient of $w'$ in $P_w$.
Furthermore, $P_w$ is a linear combination of permutations of $w$, from which Theorem A follows.
Theorem B follows from the fact that $P_w$ is independent of $n$.
The proofs of both theorems are given in \S\ref{section on the transgression pairing}.

This alternative description of the entries $(\sig_{w,n},\alp_{w',n})_n$ enables us to compute them in an elementary way, yielding the table in \S\ref{section on the fundamental matrix of depth 4}.

\section{Lyndon words}
\label{section on Lyndon words}
Let $(X,\leq)$ be a totally ordered nonempty set, considered as an alphabet.
Let $X^*$ be the free unital monoid on $X$.
We consider its elements as (associative) words in the alphabet $X$ and write them as $w=(a_1\cdots a_s)$ with $a_1\nek a_s\in X$.
Thus the unit element $1$ of $X^*$ is the empty word and the product is the concatenation of words.
We write $|w|$ for the length $s$ of $w$.
The \textsl{support} of $w$ is the set
$\Supp(w)=\{a_1\nek a_s\}$.
When $w=uv$ for words $w,u,v$, we say that $u$ and $v$ are a \textsl{prefix}, resp., a \textsl{suffix}, of $w$.

Let $\leq_{\rm lex}$ be the lexicographic order on $X^*$ induced by $\leq$.
Let $\preceq$ be the \textsl{length-lexicographic order} on $X^*$, defined by $w\preceq v$ if and only if $|w|<|v|$ or else $|w|=|v|$ and $w\leq_{\rm lex} v$.

A nonempty word $w$ in $X^*$ is called a \textsl{Lyndon word} if it is smaller with respect to $\leq_{\rm lex}$ than all its proper suffixes.
The family of all Lyndon words in $X^*$ is a Hall family \cite{Reutenauer93}*{Th.\ 5.1}.
We write $\Lyn(X)$ (resp., $\Lyn_{\leq n}(X)$) for the set of all Lyndon words in $X^*$ (resp., of length $\leq n$).

\begin{exam}
\label{Lyndon words of length at most 4}
\rm
(1) \quad
All one-letter words $(x)$, $x\in X$, are Lyndon words.

(2) \quad
A two-letter word $(xy)$ in $X^*$ is Lyndon if and only if $x<y$.

(3) \quad
The three-letter Lyndon words are those of the forms 
$(xxy)$, $(xyy)$, $(xyz)$, $(xzy)$, where $x<y<z$.

(4) \quad 
The four-letter Lyndon words are those of the forms
\[
\begin{split}
&(xxxy), (xxyy), (xxyz), (xxzy), (xyxz), (xyyy), (xyyz), (xyzy), (xyzz),\\
&(xyzt), (xytz), (xzyy), (xzyz), (xzyt), (xzzy), (xzty), (xtyz), (xtzy)
\end{split}
\]
with $x<y<z<t$.
\end{exam}

Given a Lyndon word $w$ in $X^*$ with $|w|\geq2$, let $v$ be its longest proper suffix which is Lyndon, and write $w=uv$ with $u\in X^*$.
Since one-letter words are Lyndon, this is well-defined.
We call this decomposition the \textsl{standard factorization} of $w$. 
As noted in \cite{Efrat23}*{Lemma 2.2}, this definition coincides with the one given in \cite{Reutenauer93}*{\S4.1}, and therefore $u$ is also Lyndon.

\section{Power series}
\label{section on power series}
\subsection{Basic notions}
We fix a commutative unital ring $R$ and consider $X$ as a set of non-commuting variables.
Let $R\langle\langle X\rangle\rangle$ be the set of all formal power series in the variables of $X$ and with coefficients in $R$.
Thus $R\langle\langle X\rangle\rangle$ consists of all formal expressions $\sum_{w\in X^*}c_ww$ with $c_w\in R$ and where we view a word $w=(x_1\cdots x_s)$ also as the monomial $x_1\cdots x_s$. 
The set $R\langle\langle X\rangle\rangle$ forms an $R$-algebra in a standard way, where the multiplication is induced by the concatenation of words.
We define a Lie bracket on $R\langle\langle X\rangle\rangle$ as usual by $[f,g]=fg-gf$.

We write $R\langle\langle X\rangle\rangle^\times$ for the group of invertible elements in $R\langle\langle X\rangle\rangle$, and $R\langle\langle X\rangle\rangle^{\times,1}$ for the subset of $R\langle\langle X\rangle\rangle$ consisting of all power series $\sum_wc_ww$ with $c_{1}=1$.
Every such power series has an inverse $\sum_wd_ww$, where the coefficients $d_w$ are constructed by induction on $|w|$. 
Thus $R\langle\langle X\rangle\rangle^{\times,1}$ is a subgroup of $R\langle\langle X\rangle\rangle^\times$.

If the ring $R$ is profinite, i.e., an inverse limit of finite rings, then the addition and multiplication in $R\langle\langle X\rangle\rangle$ are continuous with respect to the product topology on $R^{X^*}$, making $R\langle\langle X\rangle\rangle$ a profinite ring.
Further, $R\langle\langle X\rangle\rangle^\times$ and $R\langle\langle X\rangle\rangle^{\times,1}$ are then profinite groups.

We write $R\langle X\rangle$ for the $R$-subalgebra of $R\langle\langle X\rangle\rangle$ consisting of all formal power series $\sum_{w\in X^*}c_ww$ for which the support $\{w\in X^*\ |\ c_w\neq0\}$ is finite.

The algebra $R\langle\langle X\rangle\rangle$ carries a canonical discrete valuation
\[
\nu=\nu_R\colon R\langle\langle X\rangle\rangle\to\dbZ_{\geq0}\cup\{\infty\},
\]
given by 
\[
\nu\Bigl(\sum_wc_ww\Bigr)=\min\bigl\{|w|\ \bigm|\ w\in X^*, c_w\neq0\bigr\}
\]
for $0\neq\sum_wc_ww\in R\langle\langle X\rangle\rangle$, and $\nu(0)=\infty$.
For $f,g\in R\langle\langle X\rangle\rangle$ one has
\[
\nu(fg)\geq\nu(f)+\nu(g), \quad \nu(f+g)\geq\min\{\nu(f),\nu(g)\}.
\]
The first inequality may be strict due to existence of zero divisors in $R$.

\subsection{Constructions using the standard factorization}
Assume further that the ordered set $(X,\leq)$ is finite and let $S=S_X$ be the free profinite group on the basis $X$ \cite{FriedJarden08}*{\S20.4}.

For every Lyndon word $w$, we define simultaneously a noncommutative polynomial $P_w\in\dbZ\langle X\rangle$ and an element $\tau_w\in S$ recursively using the standard factorization of $w$, as follows:

If $w=(x)$ is a one-letter word with $x\in X$ we set $P_{(x)}=x$ and $\tau_{(x)}=x$.
If $|w|\geq2$ we take the standard factorization $w=uv$ of $w$, where $u,v$ are Lyndon words of smaller length, and set
\[
P_w=[P_u,P_v]=P_uP_v-P_vP_u, \quad  \tau_w=[\tau_u,\tau_v]=\tau_u\inv\tau_v\inv\tau_u\tau_v.
\]
Thus $P_w$ is a Lie polynomial and $\tau_w$ is an iterated commutator.

\begin{prop}
\label{permutations}
Let $w$ be a Lyndon word in $X^*$.
\begin{enumerate}
\item[(a)]
The noncommutative polynomial $P_w$ is a $\dbZ$-linear combination of permutations of $w$.
\item[(b)]
The coefficient of $w$ in $P_w$ is $1$.
\end{enumerate}
\end{prop}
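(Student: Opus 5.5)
We prove both parts simultaneously by induction on $|w|$, using the recursive definition $P_w=[P_u,P_v]$ along the standard factorization $w=uv$. For $|w|=1$ we have $P_{(x)}=x=w$, so (a) and (b) hold trivially.

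For part (a), suppose $|w|\geq2$ with standard factorization $w=uv$, where $u,v$ are Lyndon words of smaller length. By induction, $P_u=\sum_{u'}c_{u'}u'$ with $u'$ ranging over permutations of $u$, and $P_v=\sum_{v'}d_{v'}v'$ with $v'$ ranging over permutations of $v$. Then
\[
P_w=P_uP_v-P_vP_u=\sum_{u',v'}c_{u'}d_{v'}\,(u'v'-v'u').
\]
Each word $u'v'$ and $v'u'$ has the same multiset of letters as $uv=w$, so it is a permutation of $w$. This gives (a).

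For part (b), we compute the coefficient of $w=uv$ in the expression above. A term $u'v'$ equals $uv$ exactly when $u'=u$ and $v'=v$, since $|u'|=|u|$. By induction this term contributes $c_ud_v=1\cdot1=1$. It remains to show that $w$ does not occur as $v'u'$, i.e., that $w\neq v'u'$ for every permutation $v'$ of $v$ and $u'$ of $u$. This is the main point of the proof.

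Suppose instead that $w=v'u'$ with $|v'|=|v|$. Then $v'$ is the prefix of $w$ of length $|v|$, and $u'$ is the suffix of $w$ of length $|u|$. Since $v'$ is a permutation of $v$, the prefix of $w$ of length $|v|$ has the same letter multiset as the suffix $v$. The letters are therefore distributed so that the suffix of length $|u|$ has the multiset of $u$.

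The key fact is that a Lyndon word $w$ is strictly smaller than its proper suffixes, so its first letter $a$ is the minimal letter of $w$. Moreover, $v$ is a proper suffix, so $w<_{\rm lex}v$. We compare the number of occurrences of $a$ in the various pieces.

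\emph{Case $|u|\leq|v|$.} Write $w=v'u'$, so $u'$ is a suffix of $v$. The prefix $v'$ of $w$ of length $|v|$ has the same content as $v$. Since $v'$ starts with $a$, the word $v$ contains $a$; but $v$ is Lyndon, so $v$ begins with its minimal letter, which is then $a$.

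\emph{Case $|u|>|v|$.} The analogous argument applies with the roles reversed.

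Carrying these comparisons to a contradiction directly looks messy. So I would instead use the classical fact (Reutenauer, Th.\ 5.1 / the triangularity of the Lyndon basis) that for a Lyndon word $w$, $P_w=w+\sum_{w'>_{\rm lex}w}c_{w'}w'$. This is proved by the same induction: every word occurring in $P_uP_v$ is $\geq_{\rm lex} uv$, with equality only for the leading term. Every word occurring in $P_vP_u$ is $\geq_{\rm lex} vu$, and $vu>_{\rm lex}uv$ since $w=uv<_{\rm lex}v$ and $v$ is not a prefix of $u$, so $v<uv$ cannot hold.

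The delicate step is verifying that the order comparison survives concatenation when a prefix relation may occur. For this I would cite Reutenauer's Theorem 5.1 and its proof, which is the Hall-family statement already referenced in \S\ref{section on Lyndon words}. The result is that the coefficient of $w$ in $P_w$ is exactly $1$, which proves (b).
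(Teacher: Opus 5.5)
Your proof of (a) is the paper's argument. For (b) you end up where the paper does, citing Reutenauer's Theorem~5.1, so the proposal is sound. Two remarks on how you got there.

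\emph{The reduction you first called ``the main point'' is false, not merely messy.} For $x<y<z$, the word $w=(xyzxzy)$ is Lyndon with standard factorization $u=(xyz)$, $v=(xzy)$. Yet $w=v'u'$ with $v'=(xyz)$ a permutation of $v$ and $u'=(xzy)$ a permutation of $u$. So no argument based on letter contents alone can succeed. What actually matters is that $(xyz)$ does not occur in $P_{(xzy)}=xzy-zxy-yxz+yzx$, and this is exactly the information the triangularity statement records.

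\emph{The step you defer to Reutenauer as ``delicate'' is immediate.}
\begin{enumerate}
\item[(i)] By (a), every word occurring in $P_u$ (resp.\ $P_v$) has length $|u|$ (resp.\ $|v|$). For words of equal length, $u'\geq_{\rm lex}u$ and $v'\geq_{\rm lex}v$ imply $u'v'\geq_{\rm lex}uv$ and $v'u'\geq_{\rm lex}vu$. Moreover $u'v'=uv$ only if $u'=u$ and $v'=v$. So no prefix issue ever arises.
\item[(ii)] We have $uv<_{\rm lex}vu$. Indeed $uv<_{\rm lex}v$, and $v$ is not a prefix of $uv$ (else $v<_{\rm lex}uv$); this, not ``$v$ is not a prefix of $u$'', is the relevant fact. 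Hence $uv$ and $v$ first differ at some position $i\leq|v|$, and $vu$ still agrees with $v$ there.
\end{enumerate}
Run your sketch with the strengthened induction hypothesis that $P_w$ equals $w$ plus a $\dbZ$-linear combination of words $>_{\rm lex}w$. It is then already a complete, self-contained proof of (b), whereas the paper only quotes the result.
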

\begin{proof}
(a) \quad
We argue by induction on $s=|w|$.
If $s=1$, then $P_w=w$.

Assume that $s\geq2$ and let $w=uv$ be the standard factorization of $w$.
By the induction hypothesis, $P_u=\sum_{i=0}^ta_iu_i$ and $P_v=\sum_{j=0}^rb_jv_j$, where $u_i,v_j$ are permutations of $u$ and $v$, respectively, and $a_i,b_j\in\dbZ$.
Then
\[
P_w=[P_u,P_v]=\Bigl[\sum_{i=0}^ta_iu_i,\sum_{j=0}^rb_jv_j\Bigr]
=\sum_{i,j}a_ib_j[u_i,v_j]
=\sum_{i,j}a_ib_j(u_iv_j-v_ju_i).
\]
Here the concatenations $u_iv_j$ and $v_ju_i$ are permutations of $w=uv$.
Therefore $P_w$ is a $\dbZ$-linear combination of permutations of $w$.

\medskip

(b) \quad
This is contained in \cite{Reutenauer93}*{Th.\ 5.1}.
\end{proof}

For a ring $R$ we also consider $P_w$ as a polynomial in $R\langle X\rangle$ via the natural ring homomorphism $\dbZ\to R$.

\begin{cor}
\label{nu of Pw}
For every $w\in \Lyn(X)$ one has $\nu_R(P_w)=|w|$.
\end{cor}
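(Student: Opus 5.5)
By Proposition \ref{permutations}(a), $P_w$ is a $\dbZ$-linear combination of permutations of $w$. Every permutation of $w$ is a word of length exactly $|w|$. Hence, after applying the natural ring homomorphism $\dbZ\to R$, the image of $P_w$ in $R\langle X\rangle$ is an $R$-linear combination of words of length $|w|$. It follows directly from the definition of $\nu_R$ that $\nu_R(P_w)\geq|w|$; this holds even if $P_w$ became $0$ in $R$.

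For the reverse inequality, it suffices to show that $P_w\neq0$ in $R\langle X\rangle$. For this I will invoke Proposition \ref{permutations}(b): the coefficient of the word $w$ in $P_w$ is $1\in\dbZ$, so its image in $R$ is $1_R$. Here I need to be careful about how the expansion behaves under $\dbZ\to R$. The coefficients of $P_w\in\dbZ\langle X\rangle$, taken word by word, map coefficientwise to those of its image in $R\langle X\rangle$. Distinct permutations are distinct words, so no further collapsing of terms occurs. Therefore the coefficient of $w$ in $P_w$ viewed over $R$ is $1_R$.

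As long as $1_R\neq0$, that is, $R$ is not the zero ring, the word $w$ lies in the support of $P_w$ over $R$. Thus $\nu_R(P_w)\leq|w|$, and together with the first paragraph this gives $\nu_R(P_w)=|w|$. The zero ring is excluded implicitly, since for $R=0$ the valuation is degenerate. The main (though minor) point to watch is exactly this coefficientwise reduction together with the nonvanishing of $1_R$. There is no real obstacle: the corollary is an immediate consequence of parts (a) and (b) of Proposition \ref{permutations}.
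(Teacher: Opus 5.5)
Your proof is correct and is essentially the paper's argument: the paper states this corollary without proof right after Proposition \ref{permutations}, and it follows exactly as you say, with part (a) giving $\nu_R(P_w)\geq|w|$ and part (b) giving $\nu_R(P_w)\leq|w|$ via the coefficient $1_R\neq0$ of $w$. Your observation that $R$ must be a nonzero ring is a fair implicit hypothesis that the paper leaves unstated; it holds in all the paper's applications.
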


Note that $\tau_w\in S^{(s,p)}$ where $s=|w|$.
When $1\leq s\leq n$ we therefore have $\tau_w^{p^{n-s}}\in S^{(n,p)}$.
Moreover, the following fact is proved in \cite{Efrat17}*{Th.\ 8.5}:

\begin{thm}
\label{profinite base}
The cosets of $\sig_{w,n}=\tau_w^{p^{n-s}}$, where $w$ ranges over all Lyndon words of length $1\leq s\leq n$ in $X^*$, form an $\dbF_p$-linear basis of $S^{(n,p)}/S^{(n+1,p)}$.
\end{thm}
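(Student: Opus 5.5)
The plan is to pass to the graded Lie algebra attached to the lower $p$-central filtration and reduce the statement to the classical fact that the Lyndon brackets form a basis of a free Lie algebra. Put $\mathrm{gr}_n=S^{(n,p)}/S^{(n+1,p)}$ and $L=\bigoplus_{n\geq1}\mathrm{gr}_n$. Commutators induce a Lie bracket $\mathrm{gr}_i\times\mathrm{gr}_j\to\mathrm{gr}_{i+j}$, and the $p$-power map induces a map $\pi\colon\mathrm{gr}_n\to\mathrm{gr}_{n+1}$. For $p$ odd (and in degrees $\geq2$ when $p=2$) the map $\pi$ is $\dbF_p$-linear and commutes with brackets. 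The Hall--Petresco identities give this, because $S^{(n,p)}$ is generated modulo $S^{(n+1,p)}$ by $p$-th powers and commutators. These operations make $L$ a graded Lie algebra over the polynomial ring $\dbF_p[\pi]$, with $\deg\pi=1$. By construction, the class of $\sig_{w,n}=\tau_w^{p^{n-s}}$ in $\mathrm{gr}_n$ is $\pi^{n-s}\bar\tau_w$, where $\bar\tau_w\in\mathrm{gr}_s$ is the iterated bracket of the classes of the letters of $w$, bracketed according to the standard factorization.

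The argument then has two halves.

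\emph{Spanning.} By induction on $n$, $\mathrm{gr}_n$ is spanned by the elements $\pi^{n-s}B$, where $B$ ranges over iterated brackets of letters of degree $s$. Here $\mathrm{gr}_1=S/S^{(2,p)}$ is spanned by the classes of $x\in X$. For the inductive step, $\mathrm{gr}_{n+1}$ is spanned by $\pi(\mathrm{gr}_n)$ together with $[\mathrm{gr}_1,\mathrm{gr}_n]$, and $\pi$ can be moved outside brackets. The standard Hall-set rewriting for Lyndon words (Reutenauer, Th.~5.1 and the fact that the Lyndon words form a Hall family) expresses every bracket of letters of degree $s$ as a $\dbZ$-combination of the Lyndon brackets $\bar\tau_w$ with $|w|=s$. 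Hence the cosets of the $\sig_{w,n}$ span $\mathrm{gr}_n$.

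\emph{Independence.} It suffices to show
\[
\dim_{\dbF_p}\mathrm{gr}_n\;\geq\;\#\Lyn_{\leq n}(X).
\]
I would get this from a representation, namely a Magnus-type homomorphism
\[
S\to\bigl(\dbZ_p\langle\langle X\rangle\rangle\bigr)^{\times,1},\qquad x\mapsto 1+x.
\]
Filter the target by the ideals $I_n=\sum_{k}p^{k}\cdot(\text{terms of }\nu\geq n-k)$. An induction shows that $S^{(n,p)}$ maps into $1+I_n$. Moreover, the leading term of $\tau_w^{p^{n-s}}$ modulo $I_{n+1}$ is $p^{n-s}P_w$. This follows because $\tau_w\mapsto 1+P_w+(\text{terms with }\nu>s)$, and taking $p$-th powers multiplies the leading term by $p$ modulo higher filtration. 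Now $I_n/I_{n+1}\isom\bigoplus_{s\leq n}p^{n-s}\dbF_p\langle X\rangle_s$. Proposition~\ref{permutations}(b), together with the triangularity of Lyndon polynomials in the basis of words (the lowest word of $P_w$ is $w$), shows that the images $p^{n-s}P_w$ are $\dbF_p$-linearly independent. Hence so are the cosets of the $\sig_{w,n}$.

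The main obstacle is the leading-term computation in the Magnus filtration. One has to control the cross terms in
\[
(1+p^kf)^p=1+p^{k+1}f+\binom p2p^{2k}f^2+\cdots+p^{kp}f^p,
\]
and when $k=0$ the term $f^p$ has valuation $ps\geq s+1$. So one must check that every such term lies in $I_{n+1}$. The case $p=2$ needs separate care, since the term $\binom22f^2=f^2$ has degree $2s$ with no power of $2$. I would handle $p=2$ by checking directly that this term lands in the next filtration step, using $2s\geq s+1$.
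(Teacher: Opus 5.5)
The paper does not prove this statement. It imports it from \cite{Efrat17}*{Th.\ 8.5}, where it comes packaged with the cohomological basis of Theorem~\ref{cohomological base} and the unitriangular duality. Your argument takes a different, cohomology-free route. The upper bound comes from Hall rewriting in the graded Lie algebra $\bigoplus_n S^{(n,p)}/S^{(n+1,p)}$. The lower bound comes from the Magnus map $\theta_n\colon S^{(n,p)}/S^{(n+1,p)}\to I_n/I_{n+1}$.

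The $w'$-coordinate of $\theta_n(\sig_{w,n})$ is exactly $\langle w,w'\rangle_n$ of (\ref{definition of pairing}). So your independence half is in substance the triangularity of Proposition~\ref{basic properties of the pairing} and Theorem~\ref{pairing as a coefficient}, used directly rather than through Theorem~\ref{connection between the pairings}. What you gain is a self-contained proof of the profinite statement; the cited route yields both bases and their duality at once. The outline is sound, but one step is false as written.

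For $p=2$ and $w=(x)$, the leading term of $\sig_{(x),n}=x^{2^{n-1}}$ modulo $I_{n+1}$ is not $2^{n-1}x$. When you raise $1+f$, with $f$ of level $m$, to the $p$-th power, the new level is $m+1$. The term $f^p$ must therefore lie in $I_{m+2}$, so the required inequality is $pm\geq m+2$, not $ps\geq s+1$. For odd $p$ this still holds, but for $p=2$ it fails at $m=s=1$: $(1+x)^2=1+2x+x^2$ with $x^2\in I_2\setminus I_3$. Tracking the further squarings gives
\[
\Mag_{\dbZ_2}(\sig_{(x),n})-1\equiv 2^{n-1}x+2^{n-2}x^2\pmod{I_{n+1}},
\]
which is exactly the exceptional value $\langle(x),(xx)\rangle_n=-1$ of Proposition~\ref{zero pairing for distinct lengths}. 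Your proposed check ``$2s\geq s+1$'' therefore does not dispose of this term.

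The conclusion survives, but you must argue it differently, since the stray term sits on the non-Lyndon word $xx$ in degree $2$. One option is to project first onto the degree-$1$ component, where only the $\sig_{(x),n}$ contribute (with images $2^{n-1}x$), and then use homogeneity for the remaining words. The other is to restrict $\theta_n$ to the coordinates indexed by $\Lyn_{\le n}(X)$. This kills $xx$ and leaves the matrix $[\langle w,w'\rangle_n]$, which is unitriangular in the length-lexicographic order.

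A similar, harmless correction is needed in your spanning half for $p=2$. There $\mathrm{gr}$ is not an $\dbF_2[\pi]$-Lie algebra: $\pi$ is not additive on degree $1$, and in degree $3$ one has $[\pi\bar x,\bar y]=\pi[\bar x,\bar y]+[[\bar x,\bar y],\bar x]$. The correction terms are brackets of letters of the right degree, so the span is unchanged. Still, ``moving $\pi$ outside brackets'' has to be stated with these terms included.
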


\section{The Magnus homomorphism}
\label{section on the Magnus homomorphism} 
\subsection{Definition and basic properties}
Assume that $R$ is a profinite ring and let $S$ be again the free profinite group on the finite basis $X$.
Using the universal property of $S$, we define the (continuous) \textsl{Magnus homomorphism} $\Mag_R\colon S\to R\langle\langle X\rangle\rangle^{\times,1}$ by setting $\Mag_R(x)=1+x$ for $x\in X$.
For $\sig\in S$, let $\eps_{w,R}(\sig)$ be the coefficient of $w$ in $\Mag_R(\sig)$.
Thus
\[
\Mag_R(\sig)=\sum_{w\in X^*}\eps_{w,R}(\sig)w.
\]
As $\Mag_R$ preserves multiplication, for $w\in X^*$ and $\sig,\tau\in S$ one has
\begin{equation}
\label{eps of product}
\eps_{w,R}(\sig\tau)=\sum_{w=uv}\eps_{u,R}(\sig)\eps_{v,R}(\tau).
\end{equation}

In particular, if $|w|=1$, then $\eps_{w,R}\colon S\to R$ is a group homomorphism.

The following characterization of the lower $p$-central filtration $S^{(n,p)}$, $n=1,2\nek$ of $S$ is due to Koch \cite{Koch60} in the discrete case. 
See \cite{Efrat17}*{Lemma 4.1(b)} for the profinite case.

\begin{prop}
\label{lower p-central filtration via Magnus coefficients}
Let $n$ be a positive integer and let $\sig\in S$.
Then $\sig\in S^{(n,p)}$ if and only if $\eps_{w,\dbZ_p}(\sig)\in p^{n-|w|}\dbZ_p$ for every $w\in X^*$ with $1\leq |w|<n$.
\end{prop}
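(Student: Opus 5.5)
The plan is to prove both directions by comparing the lower $p$-central filtration with the filtration of $S$ pulled back from a filtration of the power series ring. For $m\geq1$ let
\[
J_m=\Bigl\{\sum_wc_ww\in\dbZ_p\langle\langle X\rangle\rangle\ \Bigm|\ c_w\in p^{\max(m-|w|,0)}\dbZ_p \text{ for all } w,\ c_1=0\Bigr\},
\]
and set $T_m=\{\sig\in S\mid \Mag_{\dbZ_p}(\sig)\in 1+J_m\}$. The condition in the proposition says exactly that $\sig\in T_n$: words with $|w|\geq n$ impose no condition, and the coefficient of the empty word is $0$ because $\Mag_{\dbZ_p}(\sig)\in\dbZ_p\langle\langle X\rangle\rangle^{\times,1}$. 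The $J_m$ are closed two-sided ideals of the non-unital algebra of series without constant term, and they satisfy
\[
J_mJ_{m'}\subseteq J_{m+m'},\qquad pJ_m\subseteq J_{m+1},\qquad J_{m+1}\subseteq J_m .
\]
From these one checks that each $T_m$ is a closed subgroup of $S$. Closure under products is immediate. For inverses, expand $(1+f)\inv=\sum_k(-f)^k$, which converges and stays in $1+J_m$. Closure of $T_m$ follows from the continuity of $\Mag_{\dbZ_p}$.

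For the inclusion $S^{(n,p)}\subseteq T_n$ I argue by induction on $n$. The case $n=1$ is trivial, since $T_1=S$. Suppose $S^{(n,p)}\subseteq T_n$, and let $\tau\in T_n$ with $\Mag(\tau)=1+f$, $f\in J_n$.

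\emph{$p$-th powers.} We have
\[
(1+f)^p=1+pf+\sum_{1<i<p}\binom pi f^i+f^p .
\]
Here $pf\in J_{n+1}$, the middle terms lie in $pJ_{2n}\subseteq J_{n+1}$, and $f^p\in J_{pn}\subseteq J_{n+1}$ because $pn\geq n+1$. Hence $\tau^p\in T_{n+1}$.

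\emph{Commutators.} For $\sig\in S$ with $\Mag(\sig)=1+g$, $g\in J_1$, we have
\[
\Mag([\sig,\tau])=1+(1+g)\inv(1+f)\inv(gf-fg).
\]
Since $gf-fg\in J_{n+1}$, this lies in $1+J_{n+1}$.

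As $T_{n+1}$ is a closed subgroup, it contains $S^{(n+1,p)}$.

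For the converse $T_n\subseteq S^{(n,p)}$ I again use induction on $n$. Let $\sig\in T_{n+1}$. Since $T_{n+1}\subseteq T_n$, the induction hypothesis gives $\sig\in S^{(n,p)}$. By Theorem \ref{profinite base} we can write
\[
\sig\equiv\prod_w\sig_{w,n}^{a_w}\pmod{S^{(n+1,p)}},\qquad a_w\in\{0\nek p-1\},
\]
and by the first direction $S^{(n+1,p)}\subseteq T_{n+1}$. So it suffices to show that a product $\prod_w\sig_{w,n}^{a_w}$ lying in $T_{n+1}$ has all $a_w=0$. For this I use the map $S^{(n,p)}\to J_n/J_{n+1}$, $\sig\mapsto\Mag(\sig)-1$. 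It is a homomorphism, because $(1+f)(1+g)-1=f+g+fg$ and $fg\in J_{2n}\subseteq J_{n+1}$.

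The key computation, and the step I expect to need the most care, is the leading part of $\Mag(\sig_{w,n})$. By induction on the standard factorization, $\Mag(\tau_w)=1+P_w+h_w$ with $\nu(h_w)>|w|$. Indeed, if $\Mag(\tau_u)=1+f$ and $\Mag(\tau_v)=1+g$, then
\[
\Mag(\tau_w)-1=(1+f)\inv(1+g)\inv(fg-gf),
\]
whose lowest-degree part is $[P_u,P_v]=P_w$, using Corollary \ref{nu of Pw}. Raising $1+P_w+h_w$ to the power $p^{n-s}$, where $s=|w|$, every term other than the linear one has valuation $\geq2s$. Hence the degree-$s$ homogeneous component of $\Mag(\sig_{w,n})-1$ is exactly $p^{n-s}P_w$, and all its nonzero components have degree $\geq s$. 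This route sidesteps the problem of $f^p$ not lying in $J_{m+2}$ when $p=2$.

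Now let $s$ be the minimal length of a $w$ with $a_w\neq0$. The degree-$s$ component of the image of $\prod_w\sig_{w,n}^{a_w}$ is then $p^{n-s}\sum_{|w|=s}a_wP_w$. Membership in $J_{n+1}$ forces $\sum_{|w|=s}a_wP_w\equiv0\pmod p$. This contradicts the $\dbF_p$-linear independence of the $P_w$, $|w|=s$. That independence follows from Proposition \ref{permutations}(b) together with the standard triangularity of Lyndon bracketings (\cite{Reutenauer93}*{Th.\ 5.1}): $P_w=w+{}$(words lexicographically larger than $w$). Taking the lex-smallest $w$ with $a_w\neq0$, its coefficient in the sum is $a_w\not\equiv0$. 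Hence all $a_w=0$, so $\sig\in S^{(n+1,p)}$, which completes the induction.
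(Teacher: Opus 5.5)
Your proof is correct, but it does not follow the paper, which gives no argument for this proposition: it imports it from Koch \cite{Koch60} (discrete case) and \cite{Efrat17}*{Lemma 4.1(b)} (profinite case). You instead give a self-contained Magnus-type proof.

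The inclusion $S^{(n,p)}\subseteq T_n$, via the filtration $J_m$ with $J_mJ_{m'}\subseteq J_{m+m'}$ and $pJ_m\subseteq J_{m+1}$, is clean and uniform in $p$; it needs no $p$-adic binomial estimates.

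The converse follows the classical Magnus--Witt strategy. You show that $\sig\mapsto\Mag(\sig)-1$ induces an injection $S^{(n,p)}/S^{(n+1,p)}\hookrightarrow J_n/J_{n+1}$ by combining a spanning set with the mod-$p$ independence of the leading forms $p^{n-s}P_w$. That independence rests on Proposition \ref{Mag of powers of tau} and the lexicographic triangularity $P_w=w+\sum_{u>_{\rm lex}w}c_uu$, essentially the same ingredients behind Theorem \ref{pairing as a coefficient} and the unitriangularity in Proposition \ref{basic properties of the pairing}. Your minimal-length argument correctly handles Lyndon words of different lengths.

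The price is the dependence on Theorem \ref{profinite base}. Your argument re-proves its linear-independence half, so only the spanning half is actually used. You should say this explicitly and make sure that half is obtained independently of the statement being proved. Classically it is, via commutator collection in $\bigoplus_n S^{(n,p)}/S^{(n+1,p)}$ together with the Hall--Lyndon basis theorem. In \cite{Efrat17} the basis theorem (Th.~8.5) appears only after Lemma~4.1, so quoting it wholesale does not by itself rule out circularity.
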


\subsection{Unipotent upper triangular matrices}
For $s\geq1$, let $\dbU_s(R)$ be the group of all unipotent upper-triangular $(s+1)\times(s+1)$-matrices with entries in $R$.
For a word $w=(a_1\cdots a_s)$ of length $s$ we consider the map 
\[
\rho_w\colon S\to\dbU_s(R), \quad\sig\mapsto\bigl[\eps_{(a_i\cdots a_{j-1}),R}(\sig)\bigr]_{1\leq i\leq j\leq s+1}.
\]
It follows from (\ref{eps of product}) that $\rho_w$ is a group homomorphism.
We call it the (profinite) \textsl{Magnus representation} associated with $w$ \cite{Efrat24}*{Prop.\ 4.1}.

Now assume that $1\leq s\leq n$ and take $R=\dbZ/p^{n-s+1}$.
By \cite{Efrat17}*{Prop.\ 6.3}, $\dbU_s(R)^{(n,p)}$ is the central subgroup of $\dbU_s(R)$ consisting of all matrices of the form $I_{s+1}+ap^{n-s}E_{1,s+1}$ with $a\in \dbZ$, i.e., the matrices which are $1$ on the main diagonal, $ap^{n-s}\pmod{p^{n-s+1}}$ at entry $(1,s+1)$, and zero elsewhere.
Thus $\dbU_s(R)^{(n,p)}\isom\dbZ/p$.
We set
\[
\overline{\dbU}_s(R)=\dbU_s(R)/\dbU_s(R)^{(n,p)}.
\]
By the Schreier correspondence  \cite{NeukirchSchmidtWingberg}*{Th.\ 1.2.4}, the central extension
\[
0\to\dbF_p\to\dbU_s(R)\to\overline{\dbU}_s(R)\to1
\]
corresponds to a cohomology element $\omega_{s,n}\in H^2(\overline{\dbU}_s(R),\dbF_p)$.

\subsection{The elements $\alp_{w,n}$}
The Magnus representation $\rho_w$ induces a profinite group homomorphism 
\[
\bar\rho_w\colon S/S^{(n,p)}\to\overline{\dbU}_s(R).
\]
We denote by 
\begin{equation}
\label{definition of alpha}
\alp_{w,n}=\bar\rho_w^*\omega_{s,n}
\end{equation}
the pullback of $\omega_{s,n}$ to $H^2(S/S^{(n,p)},\dbF_p)$ along $\bar\rho_w$.

\begin{exam}
\label{Bocksteins and Massey products}
\rm
(1) \quad
Suppose that $s=1$ and $w=(x)$.
We view $\eps_{(x),\dbZ/p^{n-1}}$ as an element of $H^1(S/S^{(n,p)},\dbZ/p^{n-1})=\Hom_{\rm cont}(S/S^{(n,p)},\dbZ/p^{n-1})$.
Let 
\[
\Bock\colon H^1(S/S^{(n,p)},\dbZ/p^{n-1})\to H^2(S/S^{(n,p)},\dbF_p)
\]
be the Bockstein map, i.e., the connecting homomorphism arising from the short exact sequence of trivial $S/S^{(n,p)}$-modules
\[
0\to \dbZ/p\to\dbZ/p^n\to\dbZ/p^{n-1}\to0. 
\]
Then $\alp_{(x),n}=\Bock(\eps_{(x),\dbZ/p^{n-1}})$ \cite{Efrat17}*{Example 7.4(a)}.

\medskip

(2) \quad
Suppose that $s=n$ and $w=(x_1\cdots x_n)$.
Then $\alp_{w,n}$ is an element of the $n$-fold Massey product $\langle\eps_{(x_1),\dbZ/p}\nek\eps_{(x_n),\dbZ/p}\rangle$ \cite{Efrat17}*{Example 7.4(b)}.
\end{exam}

In analogy with Theorem \ref{profinite base} we have:

\begin{thm}
[\cite{Efrat17}*{Th.\ 8.5}]
\label{cohomological base}
The elements $\alp_{w,n}$, where $w\in\Lyn_{\leq n}(X)$, form an $\dbF_p$-linear basis of $H^2(S/S^{(n,p)},\dbF_p)$.
\end{thm}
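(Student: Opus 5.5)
The plan is to deduce linear independence of the $\alp_{w,n}$ from their pairings with the basis $\sig_{w,n}$ of Theorem \ref{profinite base}. The transgression pairing is perfect, so it suffices to show that the matrix $[(\bar\sig_{w,n},\alp_{w',n})_n]_{w,w'}$, with rows and columns ordered by $\preceq$, is unipotent upper triangular.

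\emph{Dimension count.} Since $S$ is free profinite, $\mathrm{cd}_p(S)\leq1$, so $H^2(S,\dbF_p)=0$. The five-term exact sequence for $1\to S^{(n,p)}\to S\to S/S^{(n,p)}\to1$ then shows that transgression
\[
\trg\colon H^1(S^{(n,p)},\dbF_p)^S\to H^2(S/S^{(n,p)},\dbF_p)
\]
is surjective. It is also injective, because the restriction $H^1(S/S^{(n,p)},\dbF_p)\to H^1(S,\dbF_p)$ is an isomorphism. Moreover $H^1(S^{(n,p)},\dbF_p)^S=\Hom(S^{(n,p)}/S^{(n+1,p)},\dbF_p)$. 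Hence $(\cdot,\cdot)_n$ is non-degenerate, and by Theorem \ref{profinite base}
\[
\dim H^2(S/S^{(n,p)},\dbF_p)=|\Lyn_{\leq n}(X)|.
\]
So it is enough to prove that the $\alp_{w,n}$ are linearly independent, and this follows once the matrix is unitriangular.

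\emph{Pairing as a Magnus coefficient.} Let $|w'|=s'$ and $R=\dbZ/p^{n-s'+1}$. I would use naturality of transgression along $\rho_{w'}$. This requires $\rho_{w'}(S^{(n,p)})\subseteq\dbU_{s'}(R)^{(n,p)}$, which holds because filtrations are preserved by homomorphisms. For a central extension by $\dbF_p$, the transgression of $\omega_{s',n}$ is the class of the extension, and pairing it with an element of the kernel reads off that element. Combining these with the description of $\dbU_{s'}(R)^{(n,p)}$ (matrices $I+ap^{n-s'}E_{1,s'+1}$), the expected formula is
\[
(\bar\sig,\alp_{w',n})_n=\eps_{w',R}(\sig)/p^{n-s'}\bmod p\qquad(\sig\in S^{(n,p)}).
\]
Checking this identification carefully, including signs and the Schreier convention, is the step I expect to be the main obstacle.

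\emph{Computing the coefficients.} By induction on the standard factorization, using $\Mag(\tau)=1+f$ with $\nu(f)\geq|u|$ and the formula for $\Mag$ of a commutator, I would show that $\Mag_{\dbZ_p}(\tau_w)=1+P_w+h$ with $\nu(h)>s$, where $s=|w|$. Then
\[
\Mag(\sig_{w,n})=(1+P_w+h)^{p^{n-s}}=1+p^{n-s}(P_w+h)+\sum_{k\geq2}\binom{p^{n-s}}{k}(P_w+h)^k .
\]
For $s'<s$ the coefficient $\eps_{w'}(\sig_{w,n})$ vanishes. For $s'=s$ only the linear term contributes, since the $k\geq2$ terms have degree at least $2s>s$. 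Therefore the pairing equals the coefficient of $w'$ in $P_w$, taken mod $p$. By Proposition \ref{permutations}(b) together with Reutenauer's triangularity (\cite{Reutenauer93}*{Th.\ 5.1}: $P_w$ is $w$ plus $\dbZ$-multiples of words lexicographically larger than $w$), this coefficient is $1$ if $w'=w$ and $0$ if $w'<_{\rm lex}w$. Hence all entries with $w'\prec w$ vanish and the diagonal entries equal $1$, so the matrix is unipotent upper triangular. The entries with $w\prec w'$ are not needed, so no $p$-adic estimates on the binomial terms are required for this statement.
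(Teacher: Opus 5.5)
Your proposal is correct and is essentially the argument behind the cited \cite{Efrat17}*{Th.\ 8.5}, which this paper encodes in Proposition~\ref{basic properties of the pairing}(a),(b), Theorem~\ref{connection between the pairings} and the non-degeneracy of $(\cdot,\cdot)_n$: unitriangular duality against the basis $\bar\sig_{w,n}$ together with the resulting dimension count. The sign question you single out as the main obstacle is harmless for this statement, since a uniform sign only multiplies the matrix by $\pm1$ and does not affect invertibility (also, the map $H^1(S/S^{(n,p)},\dbF_p)\to H^1(S,\dbF_p)$ in your dimension count is inflation, not restriction).
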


\section{Triangulation and binomial coefficients}
\label{section on triangualtion and binmomial coeffcients}
The following \textsl{triangulation property} will be fundamental for our analysis.
Here $R$ is a profinite commutative ring and $P_w$ is considered as before as a non-commutative polynomial in $R\langle X\rangle$.
We write $O(m)$ for a power series $f$ in $R\langle\langle X\rangle\rangle$ with $\nu_R(f)\geq m$.

\begin{prop} [\cite{Efrat17}*{Prop.\ 4.4(a)}]
\label{triangulation Efr17}
For $w\in\Lyn(X)$ one has
\[
\Mag_R(\tau_w)=1+P_w+O(|w|+1).
\]
\end{prop}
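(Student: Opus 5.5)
We argue by induction on $s=|w|$, following the recursive definitions of $P_w$ and $\tau_w$ through the standard factorization, and working over an arbitrary profinite commutative ring $R$. For $s=1$, $w=(x)$ and $\Mag_R(\tau_w)=\Mag_R(x)=1+x=1+P_{(x)}$, so the statement holds with zero error term.

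For $s\geq2$ let $w=uv$ be the standard factorization. By induction,
\[
\Mag_R(\tau_u)=1+P_u+O(|u|+1),\qquad \Mag_R(\tau_v)=1+P_v+O(|v|+1).
\]
Write $A=\Mag_R(\tau_u)-1$ and $B=\Mag_R(\tau_v)-1$. Then $\nu(A)\geq|u|$ and $\nu(B)\geq|v|$, since $\nu(P_u)=|u|$ and $\nu(P_v)=|v|$ by Corollary \ref{nu of Pw}. We then use the standard commutator identity in $R\langle\langle X\rangle\rangle^{\times,1}$:
\[
(1+A)^{-1}(1+B)^{-1}(1+A)(1+B)=1+(1+A)^{-1}(1+B)^{-1}\bigl(AB-BA\bigr).
\]
This holds because $(1+A)(1+B)-(1+B)(1+A)=AB-BA$.

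Since $(1+A)^{-1}(1+B)^{-1}=1+O(1)$ and $\nu(AB-BA)\geq|u|+|v|=s$, we obtain
\[
\Mag_R(\tau_w)=1+[A,B]+O(s+1),
\]
where the $O(1)$-part of the inverse factor multiplies $[A,B]$ into valuation $\geq s+1$.

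Finally we expand $[A,B]$ with $A=P_u+A'$ and $B=P_v+B'$, where $\nu(A')\geq|u|+1$ and $\nu(B')\geq|v|+1$. Using $\nu(fg)\geq\nu(f)+\nu(g)$, every cross term such as $P_uB'$, $A'P_v$ or $A'B'$ has valuation $\geq s+1$. Hence $[A,B]=[P_u,P_v]+O(s+1)=P_w+O(s+1)$, which completes the induction.

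The only delicate point is that $\nu$ is merely super-multiplicative, because $R$ may have zero divisors. The argument above uses only the inequalities $\nu(fg)\geq\nu(f)+\nu(g)$ and $\nu(f+g)\geq\min\{\nu(f),\nu(g)\}$, so this causes no problem. The remaining verification, the rearranged commutator identity, is routine.
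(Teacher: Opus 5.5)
Your proof is correct and is essentially the standard argument: this paper does not reprove the statement but cites \cite{Efrat17}*{Prop.\ 4.4(a)}, and the proof there is the same induction along the standard factorization $w=uv$. That induction rests on the commutator estimate $\Mag_R([\sig,\tau])=1+[f,g]+O(a+b+1)$ whenever $\Mag_R(\sig)=1+f+O(a+1)$ and $\Mag_R(\tau)=1+g+O(b+1)$ with $\nu(f)\geq a$ and $\nu(g)\geq b$; your identity $(1+A)^{-1}(1+B)^{-1}(1+A)(1+B)=1+(1+A)^{-1}(1+B)^{-1}(AB-BA)$ is the classical way to prove that estimate, and your valuation bookkeeping, which uses only the super-multiplicativity of $\nu$, is sound.
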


\begin{prop}
\label{Mag of powers of tau}
For $w\in\Lyn(X)$ and for any positive integer $k$ one has
\[
\Mag_R(\tau_w^{p^k})=1+p^kP_w+O(|w|+1).
\]
\end{prop}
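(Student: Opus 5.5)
Write $s=|w|$ and $f=\Mag_R(\tau_w)-1$. By Proposition~\ref{triangulation Efr17}, $f=P_w+g$ with $\nu_R(g)\geq s+1$, and Corollary~\ref{nu of Pw} gives $\nu_R(f)\geq s$. Since $\Mag_R$ is a homomorphism, $\Mag_R(\tau_w^{p^k})=(1+f)^{p^k}$. All terms lie in the profinite ring $R\langle\langle X\rangle\rangle$, and $1$ commutes with $f$, so the binomial theorem applies verbatim:
\[
(1+f)^{p^k}=\sum_{i=0}^{p^k}\binom{p^k}{i}f^i=1+p^kf+\sum_{i=2}^{p^k}\binom{p^k}{i}f^i .
\]
The plan is to show that every term in the last sum is $O(s+1)$ and then to substitute $f=P_w+g$.

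For $i\geq2$, the inequality $\nu(fg)\geq\nu(f)+\nu(g)$ gives $\nu_R(f^i)\geq is\geq 2s\geq s+1$ because $s\geq1$. Hence each $\binom{p^k}{i}f^i$ with $i\geq2$ is $O(s+1)$, whatever the integer coefficient is. The sum is infinite only in the formal sense, being finite here, so $\sum_{i\ge2}\binom{p^k}{i}f^i=O(s+1)$ by the ultrametric inequality for $\nu$. Next, $p^kf=p^kP_w+p^kg$, and $\nu_R(p^kg)\geq\nu_R(g)\geq s+1$. Combining these gives
\[
\Mag_R(\tau_w^{p^k})=1+p^kP_w+O(s+1),
\]
as claimed.

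No genuine obstacle is expected, and in particular no $p$-adic estimate of the binomial coefficients is needed at this order. The valuation bound already disposes of all the higher terms, since $2s>s$. The one point to state explicitly is that multiplying a series by an integer, or by any element of $R$, cannot lower $\nu_R$; this is immediate from the definition of $\nu_R$ as a minimum over the support. Presumably the finer binomial valuations mentioned in the introduction are used later, when one reduces modulo $p^{n-s+1}$ and reads off the $(1,s+1)$ entry of the matrix.
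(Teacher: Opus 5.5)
Your proof is correct and is essentially the paper's argument: expand $(1+P_w+O(|w|+1))^{p^k}$ by the binomial theorem, then note that every term with $i\geq2$ has valuation at least $i|w|\geq|w|+1$, whatever the binomial coefficient is. As you observe, only $\nu_R(P_w)\geq|w|$ is needed for this.
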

\begin{proof}
By Proposition \ref{triangulation Efr17} and the binomial expansion formula,
\[
\begin{split}
\Mag_R(\tau_w^{p^k})&=(1+P_w+O(|w|+1))^{p^k}  \\
&=1+p^kP_w+O(|w|+1)+\sum_{i=2}^{p^k}\binom{p^k}i(P_w+O(|w|+1))^i.
\end{split}
\]
By Corollary \ref{nu of Pw}, $\nu_R(P_w)=|w|$.
Hence for every $2\leq i\leq p^k$ we have 
\[
\nu_R\Bigl(\binom{p^k}i(P_w+O(|w|+1))^i\Bigr)\geq i|w|\geq |w|+1
\]
and the assertion follows.
\end{proof}

For a nonnegative integer $k$, let $d_R(k)$ be the minimal integer $1\leq i\leq p^k$ such that $\binom{p^k}i\neq0$ in $R$.
We similarly have:

\begin{prop}
\label{Magnus and min}
Let $k\geq0$ be an integer and let $w\in\Lyn(X)$.
Then
\[
\Mag_R(\tau_w^{p^k})=1+O(d_R(k)|w|).
\]
\end{prop}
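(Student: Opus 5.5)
We follow the same pattern as the proof of Proposition \ref{Mag of powers of tau}, but we now keep track of the vanishing of binomial coefficients in $R$. The plan is to start from the triangulation property, Proposition \ref{triangulation Efr17}, in a weakened form. It gives $\Mag_R(\tau_w)=1+f$, where $f=P_w+O(|w|+1)$. By Corollary \ref{nu of Pw}, or directly since $P_w$ is homogeneous of degree $|w|$, we have $\nu_R(f)\geq|w|$.

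We then expand using the binomial formula, which is valid because $1$ commutes with $f$:
\[
\Mag_R(\tau_w^{p^k})=(1+f)^{p^k}=1+\sum_{i=1}^{p^k}\binom{p^k}{i}f^i .
\]
By the definition of $d_R(k)$, every coefficient $\binom{p^k}{i}$ with $1\leq i<d_R(k)$ vanishes in $R$, so those terms drop out. For $i\geq d_R(k)$, the submultiplicativity $\nu_R(gh)\geq\nu_R(g)+\nu_R(h)$ gives
\[
\nu_R\Bigl(\binom{p^k}{i}f^i\Bigr)\geq\nu_R(f^i)\geq i\,\nu_R(f)\geq i|w|\geq d_R(k)|w| .
\]
Multiplying by a scalar of $R$ can only increase $\nu_R$, and $\nu_R$ of a sum is at least the minimum of the valuations, so the whole sum is $O(d_R(k)|w|)$. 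This gives the claim.

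A few small points need checking. First, when $k=0$ we have $d_R(0)=1$ (if $R\neq 0$), and the claim reduces to $\Mag_R(\tau_w)=1+O(|w|)$, which is immediate. If $R=0$, then $d_R(k)$ is undefined or everything vanishes trivially, so that case can be ignored. Second, the sum is finite, so no convergence issue arises in the profinite ring $R\langle\langle X\rangle\rangle$. The only real content is the observation that the low-degree binomial terms vanish in $R$. I expect no genuine obstacle. The one point needing care is to use only $\nu_R(f)\geq|w|$ rather than an equality, since zero divisors in $R$ can make $\nu_R(f^i)$ exceed $i|w|$; this only helps the inequality.
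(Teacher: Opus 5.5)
Your proof is correct and takes essentially the same route as the paper: expand $(1+P_w+O(|w|+1))^{p^k}$ binomially, discard the terms with $i<d_R(k)$ because their coefficients vanish in $R$, and bound each remaining term's valuation below by $i|w|\geq d_R(k)|w|$. The paper asserts the equality $\nu_R((P_w+O(|w|+1))^i)=i|w|$, which does hold for $R\neq0$ because the coefficient of $w^i$ is $1$, so your zero-divisor caveat is unnecessary here; in any case, only the inequality you use is needed.
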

\begin{proof}
By Proposition \ref{triangulation Efr17},
\[
\Mag_R(\tau_w^{p^k})=(1+P_w+O(|w|+1))^{p^k}=1+\sum_{i\geq d_R(k)}\binom{p^k}i(P_w+O(|w|+1))^i.
\]
By Proposition \ref{permutations}, $\nu_R((P_w+O(|w|+1))^i)=i|w|$.
Therefore
\[
\begin{split}
\nu_R(\Mag_R(\tau_w^{p^k})-1)
&\geq\min\Bigl\{\nu_R((P_w+O(|w|+1))^i)\ \bigm|\ i\geq d_R(k)\Bigr\} \\
&=d_R(k)\cdot |w|.  \qedhere
\end{split}
\]
\end{proof}

We will need the following fact from \cite{Efrat20}*{Prop.\ 2.2(c)}:

\begin{prop}
\label{divisibility of binomial coefficients}
Let $t,k,m$ be positive integers such that $1\leq t\leq p^k$.
Then $p^m|\binom{p^k}i$ for every $1\leq i\leq t$ if and only if $\lfloor\log_pt\rfloor\leq k-m$.
\end{prop}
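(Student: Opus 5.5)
We plan to use Kummer's theorem: for $1\leq i\leq p^k$, the $p$-adic valuation of $\binom{p^k}i$ is $k-v_p(i)$. This is the number of carries when adding $i$ and $p^k-i$ in base $p$.

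\emph{Step 1: reduce the condition to a valuation bound.} Since $i\leq p^k$, we have $v_p(i)\leq\lfloor\log_p i\rfloor$, with equality exactly when $i$ is a power of $p$. Hence $p^m$ divides $\binom{p^k}i$ if and only if $v_p(i)\leq k-m$. So the condition ``$p^m\mid\binom{p^k}i$ for all $1\leq i\leq t$'' is equivalent to
\[
\max_{1\leq i\leq t}v_p(i)\leq k-m.
\]

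\emph{Step 2: identify the maximum.} Among $1\leq i\leq t$, the largest $p$-adic valuation is attained at $i=p^{\lfloor\log_p t\rfloor}$, which is the largest power of $p$ not exceeding $t$. Indeed, any $i\leq t$ satisfies $p^{v_p(i)}\leq i\leq t$, so $v_p(i)\leq\lfloor\log_p t\rfloor$, and the power of $p$ attains this bound. Therefore the maximum equals $\lfloor\log_p t\rfloor$, and the condition becomes $\lfloor\log_p t\rfloor\leq k-m$, as claimed.

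\emph{Step 3: the valuation formula, if not quoted.} If we prefer not to cite Kummer's theorem directly, we use the elementary identity
\[
\binom{p^k}i=\frac{p^k}{i}\binom{p^k-1}{i-1}.
\]
Each factor of $\binom{p^k-1}{i-1}=\prod_{j=1}^{i-1}\frac{p^k-j}{j}$ satisfies $v_p(p^k-j)=v_p(j)$ for $1\leq j<p^k$, so $\binom{p^k-1}{i-1}$ is a $p$-adic unit. This gives $v_p\binom{p^k}i=k-v_p(i)$.

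This last verification is the only real content, and it is routine. No other obstacle is expected, apart from being careful with the edge case $i=p^k$: there $v_p(i)=k$, so $\binom{p^k}i=1$, consistent with the formula.
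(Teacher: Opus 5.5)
Your proof is correct and complete. The paper gives no proof of this statement to compare against: it quotes it from \cite{Efrat20}*{Prop.\ 2.2(c)}, so your argument works as a self-contained replacement.

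The key input, $v_p\binom{p^k}{i}=k-v_p(i)$ for $1\le i\le p^k$, is correctly established in your Step 3. The factor $\binom{p^k-1}{i-1}$ is a $p$-adic unit because $v_p(p^k-j)=v_p(j)$ whenever $v_p(j)<k$. Step 2 correctly shows $\max_{1\le i\le t}v_p(i)=\lfloor\log_p t\rfloor$, using the bound $p^{v_p(i)}\le i\le t$ and the witness $i=p^{\lfloor\log_p t\rfloor}\le t$. The case $m>k$ is also handled automatically, since then both sides fail.

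There is one slip. The opening sentence of Step 1 claims that $v_p(i)=\lfloor\log_p i\rfloor$ holds exactly when $i$ is a power of $p$. This is false for $p>2$; for example, $i=2p$ gives equality. The sentence is never used, since the equivalence $p^m\mid\binom{p^k}{i}\iff v_p(i)\le k-m$ follows directly from the valuation formula. Simply delete it.

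Your valuation formula also gives, in one line, the consequence the paper actually needs: $d_{\dbZ/p^m}(k)=p^{k-m+1}$ for $1\le m\le k$. Indeed, $\binom{p^k}{i}\neq 0$ in $\dbZ/p^m$ if and only if $v_p(i)\ge k-m+1$, and the least such $i\ge1$ is $p^{k-m+1}$.
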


Given $1\leq m\leq k$ and taking $R=\dbZ/p^m$, this shows that $d_{\dbZ/p^m}(k)=p^{k-m+1}$.
We deduce from Proposition \ref{Magnus and min}:

\begin{cor}
\label{Magnus modulo p m}
Let $1\leq m\leq k$ and let $w\in\Lyn(X)$.
Then
\[
\Mag_{\dbZ/p^m}(\tau_w^{p^k})=1+O(p^{k-m+1}|w|).
\]
\end{cor}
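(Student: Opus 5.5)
The corollary should follow directly from Proposition \ref{Magnus and min} with $R=\dbZ/p^m$. The only work is to identify $d_{\dbZ/p^m}(k)$ as $p^{k-m+1}$ using Proposition \ref{divisibility of binomial coefficients}. I would carry this out in two steps.

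The first step is to compute $d_{\dbZ/p^m}(k)$. Since $1\leq m\leq k$, we have $p^{k-m+1}\leq p^k$, so $p^{k-m+1}$ is an admissible value of $i$. Apply Proposition \ref{divisibility of binomial coefficients} with $t=p^{k-m+1}-1$. When $m=k$ this $t$ is $0$, but then there are no indices $1\le i\le t$ and the divisibility statement is vacuous. Otherwise $t\ge1$ and $\lfloor\log_p t\rfloor = k-m \leq k-m$, so $p^m$ divides $\binom{p^k}{i}$ for all $1\le i\le t$. In other words, these binomial coefficients vanish in $\dbZ/p^m$.

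Next apply the same proposition with $t=p^{k-m+1}$. Now $\lfloor\log_p t\rfloor=k-m+1>k-m$, so there is some $1\le i\le p^{k-m+1}$ with $p^m\nmid\binom{p^k}{i}$. By the previous paragraph this $i$ cannot be smaller than $p^{k-m+1}$, so it must equal $p^{k-m+1}$. Hence $\binom{p^k}{p^{k-m+1}}\neq0$ in $\dbZ/p^m$, and $d_{\dbZ/p^m}(k)=p^{k-m+1}$.

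The second step is to substitute this value into Proposition \ref{Magnus and min} with $R=\dbZ/p^m$, which gives
\[
\Mag_{\dbZ/p^m}(\tau_w^{p^k})=1+O\bigl(p^{k-m+1}|w|\bigr).
\]
This is exactly the claim.

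I expect no real obstacle. The only delicate points are the edge case $m=k$ (where $t=0$ lies outside the hypothesis $t\geq1$ of Proposition \ref{divisibility of binomial coefficients}, so it must be handled by the vacuous argument above) and checking that $p^{k-m+1}\le p^k$, so that it lies in the allowed range of $i$.
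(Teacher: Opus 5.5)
Your proposal is correct and follows the paper's route exactly: compute $d_{\dbZ/p^m}(k)=p^{k-m+1}$ from Proposition~\ref{divisibility of binomial coefficients}, then substitute it into Proposition~\ref{Magnus and min} with $R=\dbZ/p^m$. One small slip: when $m=k$ you have $t=p^{k-m+1}-1=p-1\geq1$, not $t=0$, so there is no vacuous edge case, and your general argument (with $\lfloor\log_p t\rfloor=k-m$) already covers $m=k$.
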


\section{The pairing $\langle w,w'\rangle_n$}
\label{section on the pairing}
\subsection{Definition and vanishing criteria} 
We fix a positive integer $n$.

For positive integers $k,m$ we denote the ideal $p^k(\dbZ/p^m)$ in the ring $\dbZ/p^m$ by $p^k\dbZ/p^m$.
Given $1\leq s\leq n$ there is a well-defined group isomorphism
\[
\iota_{n,s}\colon p^{n-s}\dbZ/p^{n-s+1}\xrightarrow{\sim}\dbZ/p, \quad
p^{n-s}a\!\!\!\!\pmod{p^{n-s+1}}\mapsto a\!\!\!\!\pmod p,
\]
for $a\in\dbZ$.

Now let $w,w'\in X^*$ be words of lengths $1\leq s,s'\leq n$, respectively, with $w$ Lyndon.
Recall that $\sig_{w,n}=\tau_w^{p^{n-s}}\in S^{(n,p)}$.
By Proposition \ref{lower p-central filtration via Magnus coefficients}, $\eps_{w',\dbZ/p^{n-s'+1}}(\sig_{w,n})\in p^{n-s'}\dbZ/p^{n-s'+1}$.
We may therefore define
\begin{equation}
\label{definition of pairing}
\langle w,w'\rangle_n=\iota_{n,s'}(\eps_{w',\dbZ/p^{n-s'+1}}(\sig_{w,n}))\in\dbZ/p.
\end{equation}

The following properties of the pairing $\langle w,w'\rangle_n$ were proved in \cite{Efrat17}*{Prop.\ 6.4}.
Recall that $\preceq$ denotes the length-lexicographic order.

\begin{prop}
\label{basic properties of the pairing}
Let $w,w'$ be words in $X^*$ of lengths $1\leq s,s'\leq n$, respectively, with $w$ Lyndon.
\begin{enumerate}
\item[(a)]
If $w'\prec w$, then  $\langle w,w'\rangle_n=0$;
\item[(b)]
$\langle w,w\rangle_n=1$;
\item[(c)]
If $\Supp(w')\not\subseteq\Supp(w)$, then  $\langle w,w'\rangle_n=0$.
\end{enumerate}
\end{prop}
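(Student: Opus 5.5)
The plan is to compute the Magnus coefficient $\eps_{w',\dbZ/p^{n-s'+1}}(\sig_{w,n})$ directly from the triangulation property (Proposition \ref{triangulation Efr17}), splitting into cases according to $s'=|w'|$ versus $s=|w|$. Set $k=n-s$ and $R=\dbZ/p^{n-s'+1}$.

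For (c), I would argue through supports. The element $\tau_w$ is an iterated commutator of letters in $\Supp(w)$, so it lies in the closed subgroup of $S$ generated by $\Supp(w)$, and so does $\sig_{w,n}$. Since $\Mag_R$ is continuous, and $\Mag_R(x^{\pm1})=(1+x)^{\pm1}$ involves only the variable $x$, the image of this subgroup consists of power series supported on words in the letters of $\Supp(w)$. Hence $\eps_{w',R}(\sig_{w,n})=0$ whenever $w'$ contains a letter outside $\Supp(w)$, and (c) follows.

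For (a) and (b), note that $w'\preceq w$ forces $s'\le s$. In the case $s'<s$, I expand
\[
\Mag_R(\tau_w^{p^k})=(1+P_w+O(s+1))^{p^k}=1+\sum_{i\ge1}\binom{p^k}{i}\bigl(P_w+O(s+1)\bigr)^i .
\]
Every summand has valuation at least $is\ge s>s'$, so all coefficients of words of length $s'$ vanish and $\langle w,w'\rangle_n=0$. In the case $s'=s$, Proposition \ref{Mag of powers of tau} (or Proposition \ref{triangulation Efr17} directly when $k=0$) gives
\[
\Mag_R(\sig_{w,n})=1+p^{n-s}P_w+O(s+1).
\]
The coefficient of $w'$ is therefore $p^{n-s}c_{w'}$, where $c_{w'}$ is the coefficient of $w'$ in $P_w$. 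Applying $\iota_{n,s}$ then yields $\langle w,w'\rangle_n=c_{w'}\bmod p$. Part (b) now follows at once from Proposition \ref{permutations}(b), since $c_w=1$.

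It remains to show, for (a) in the equal-length case, that $c_{w'}=0$ whenever $|w'|=|w|$ and $w'<_{\rm lex}w$. This is the one genuinely combinatorial input, and I expect it to be the main obstacle. It is the classical triangularity of the Lyndon (Chen--Fox--Lyndon) basis: $P_w=w+\sum_{v>_{\rm lex}w}c_vv$, with the sum over words $v$ of the same length. I would cite this from \cite{Reutenauer93}*{Th.\ 5.1}, the same result already used for Proposition \ref{permutations}(b). If a self-contained argument were wanted, I would prove it by induction along the standard factorization $w=uv$. In $P_uP_v$ every word is $\ge_{\rm lex}uv$, by the induction hypothesis applied to $u$ and $v$. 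In $P_vP_u$ every word is $\ge_{\rm lex}vu$, and $vu>_{\rm lex}uv$ because $w$ is Lyndon. The delicate point is comparing words of different prefix lengths, which is exactly where the Lyndon property of the standard factorization is needed.
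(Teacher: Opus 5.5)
Your proof is correct. The paper gives no argument for this proposition; it imports it from \cite{Efrat17}*{Prop.\ 6.4}. What you give is a self-contained derivation from tools already in the paper, in three pieces:
\begin{enumerate}
\item[(i)] For $s'<s$, a pure valuation estimate on $(1+P_w+O(s+1))^{p^k}$.
\item[(ii)] For $s'=s$, the same computation the paper later uses in the proof of Theorem \ref{pairing as a coefficient}, giving $\langle w,w'\rangle_n=(P_w)_{w'}\bmod p$. Parts (a) and (b) then reduce to the classical triangularity $P_w=w+\sum_{v>_{\rm lex}w}c_vv$.
\item[(iii)] For (c), the fact that $\sig_{w,n}$ is a finite product of elements $x^{\pm1}$ with $x\in\Supp(w)$, so $\Mag_R(\sig_{w,n})\in R\langle\langle\Supp(w)\rangle\rangle$. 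You do not even need continuity here.
\end{enumerate}
This makes the paper's logical chain self-contained. There is also no circularity: the paper's proof of Theorem \ref{pairing as a coefficient} uses part (a) only for $|w'|<|w|$, and you settle that case independently.

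One correction to your last paragraph: the inductive proof of triangularity has no delicate step. Since $P_u$ and $P_v$ are homogeneous of degrees $|u|$ and $|v|$, every word of $P_uP_v$ has the form $u'v'$ with $|u'|=|u|$, $u'\ge_{\rm lex}u$ and $v'\ge_{\rm lex}v$. Hence $u'v'\ge_{\rm lex}uv$, with equality only for the leading term. Every word of $P_vP_u$ is $\ge_{\rm lex}vu$, and $vu>_{\rm lex}uv$ holds for every splitting $w=uv$ of a Lyndon word into nonempty parts. Indeed, $w<_{\rm lex}v$, and $v$ cannot be a prefix of $w$, so $w$ and $v$ first differ within the first $|v|$ letters. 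The Lyndon hypotheses enter only there and in making $u,v$ Lyndon for the induction.
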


We wish to show that $\langle w,w'\rangle_n$ vanishes in more situations.

\begin{prop}
\label{zero pairing for distinct lengths}
Let $w\in\Lyn(X)$ and $w'\in X^*$ satisfy $|w|<|w'|\leq n$.
Then $\langle w,w'\rangle_n=0$, with the unique exception $p=2$, $w=(x)$ and $w'=(xx)$ for some $x\in X$, in which case $\langle w,w'\rangle_n=-1$.
\end{prop}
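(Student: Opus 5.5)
The plan is to compute $\eps_{w',\dbZ/p^{n-s'+1}}(\sig_{w,n})$ directly from the Magnus expansion of $\tau_w^{p^{n-s}}$ and show that it vanishes, except in the one stated case. Write $s=|w|<s'=|w'|$, $k=n-s$ and $m=n-s'+1$, so that $1\le m\le k$ and $R=\dbZ/p^m$. By Corollary \ref{Magnus modulo p m},
\[
\Mag_R(\sig_{w,n})=1+O(p^{k-m+1}s)=1+O(p^{s'-s}s).
\]
Hence $\eps_{w',R}(\sig_{w,n})=0$, and therefore $\langle w,w'\rangle_n=0$, whenever $s'<p^{s'-s}s$.

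The first step is to check when the inequality $p^{s'-s}s>s'$ holds. Put $d=s'-s\ge1$. Then
\[
p^ds\ge 2^ds\ge(1+d)s\ge s+d=s',
\]
with equality throughout only if $p=2$, $d=1$ and $s=1$. So the inequality is strict in every case except $p=2$, $s=1$, $s'=2$, and all those cases are done.

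The remaining step, which needs the most care, is the exceptional case $p=2$, $w=(x)$, $|w'|=2$, $m=n-1$. Here $\sig_{w,n}=x^{2^{n-1}}$, so
\[
\Mag_R(\sig_{w,n})=(1+x)^{2^{n-1}}=\sum_i\binom{2^{n-1}}{i}x^i.
\]
If $w'\neq(xx)$, then $w'$ is not a power of $x$, its coefficient is $0$, and the pairing vanishes; this also follows from Proposition \ref{basic properties of the pairing}(c). If $w'=(xx)$, the coefficient is $\binom{2^{n-1}}2=2^{n-2}(2^{n-1}-1)$, read in $\dbZ/2^{n-1}$. Applying $\iota_{n,2}$, which divides by $2^{n-2}$, gives $2^{n-1}-1\equiv -1\pmod 2$. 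Thus the pairing equals $-1$ in this case (which is also $1$, since $p=2$). The main point of care is confirming that the exceptional case is exactly $p=2$, $s=1$, $s'=2$, and getting the $p$-adic valuation of $\binom{2^{n-1}}{2}$ right.
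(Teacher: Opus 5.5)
Your proof is correct and is essentially the paper's own: you apply Corollary \ref{Magnus modulo p m} with $k=n-|w|$ and $m=n-|w'|+1$, show that $|w'|<p^{|w'|-|w|}|w|$ except when $(p,|w|,|w'|)=(2,1,2)$, and in that case compute $\binom{2^{n-1}}{2}=2^{n-2}(2^{n-1}-1)$ to get the value $-1$. The only difference is cosmetic. You obtain the key inequality from the elementary chain $p^ds\ge 2^ds\ge(1+d)s\ge s+d$, whereas the paper uses the monotonicity of $x\mapsto x/p^x$.
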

\begin{proof}
By Corollary \ref{Magnus modulo p m} with $k=n-|w|$ and $m=n-|w'|+1$,
\begin{equation}
\label{Magnus application}
\Mag_{\dbZ/p^{n-|w'|+1}}(\tau_w^{p^{n-|w|}})=1+O(p^{|w'|-|w|}|w|).
\end{equation}

A direct computation shows that the function $f_p(x)=x/p^x$ is decreasing in $[1/\ln p,\infty)$.
In particular, it is decreasing in $[1,\infty)$ when $p>2$, and in $[2,\infty)$ when $p=2$. 
Moreover, when $p=2$ we have $f_2(1)=f_2(2)$. 
It follows that, if $(p,|w|,|w'|)\neq(2,1,2)$, then $|w'|<p^{|w'|-|w|}|w|$.
Additionally, the assumptions imply that $w'$ is not the empty word.
We conclude from (\ref{Magnus application}) that $\eps_{w',\dbZ/p^{n-|w'|+1}}(\sig_{w,n})=0$.
By (\ref{definition of pairing}), $\langle w,w'\rangle_n=0$ in this case.

Further, if $\Supp(w')\not\subseteq\Supp(w)$, then $\langle w,w'\rangle_n=0$, by Proposition \ref{basic properties of the pairing}(c).

It remains to consider the case where $(p,|w|,|w'|)=(2,1,2)$ and $\Supp(w')\subseteq\Supp(w)$.
Then $w=(x)$ and $w'=(xx)$ for some $x\in X$, which is the exceptional case in the assertion.
In this case we compute:
\[
\Mag_{\dbZ/p^{n-|w'|+1}}(\tau_w^{p^{n-|w|}})
=\Mag_{\dbZ/2^{n-1}}((x)^{2^{n-1}})=(1+x)^{2^{n-1}}
=\sum_{i=0}^{2^{n-1}}\binom{2^{n-1}}ix^i.
\]
Since $n\geq|w'|=2$, the $\dbZ/2^{n-1}$-coefficient of $w'=(xx)$ here is
\[
\binom{2^{n-1}}2=2^{n-2}(2^{n-1}-1)\equiv-2^{n-2}\pmod{2^{n-1}}.
\]
By (\ref{definition of pairing}), this shows that $\langle w,w'\rangle_n=-1$.
\end{proof}

\begin{cor}
\label{cor on vanishing of pairing}
If $w,w'\in\Lyn(X)$ and $|w|<|w'|\leq n$, then $\langle w,w'\rangle_n=0$.
\end{cor}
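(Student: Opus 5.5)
The corollary follows directly from Proposition \ref{zero pairing for distinct lengths}. I will show that when $w'$ is assumed Lyndon, the single exceptional case of that proposition cannot occur.

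Let $w,w'\in\Lyn(X)$ with $|w|<|w'|\leq n$. Since $w$ is Lyndon and $w'\in X^*$ satisfies $|w|<|w'|\leq n$, Proposition \ref{zero pairing for distinct lengths} applies and gives $\langle w,w'\rangle_n=0$, except possibly when $p=2$, $w=(x)$ and $w'=(xx)$ for some $x\in X$.

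It therefore suffices to check that $(xx)$ is not a Lyndon word. By Example \ref{Lyndon words of length at most 4}(2), a two-letter word $(xy)$ is Lyndon if and only if $x<y$, and $x<x$ fails. One can also see this directly from the definition: the proper suffix $(x)$ of $(xx)$ is a prefix of it, hence lexicographically smaller, so $(xx)$ is not smaller than all its proper suffixes. Hence the exceptional case is excluded by the hypothesis $w'\in\Lyn(X)$, and $\langle w,w'\rangle_n=0$ in all cases.

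There is no real obstacle here. The only point needing care is to apply the proposition with $w'$ viewed as an arbitrary word of $X^*$, which is its hypothesis, and then to use the Lyndon property of $w'$ solely to rule out $w'=(xx)$.
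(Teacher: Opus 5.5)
Your proposal is correct and follows the paper's own proof exactly: apply Proposition~\ref{zero pairing for distinct lengths} and note that its only exception, $w'=(xx)$, is excluded because $(xx)$ is not a Lyndon word. Your extra direct check, that the proper suffix $(x)$ is a prefix of $(xx)$ and hence lexicographically smaller than it, is a valid justification of that last point.
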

\begin{proof}
Apply Proposition \ref{zero pairing for distinct lengths}, noting that $(xx)$, $x\in X$, is not Lyndon.
\end{proof}

\subsection{$\langle w,w'\rangle_n$ as a coefficient of $P_w$}
\begin{thm}
\label{pairing as a coefficient}
Let $w,w'\in\Lyn_{\leq n}(X)$. 
Then $\langle w,w'\rangle_n$ is the coefficient $(P_w)_{w'}$ of $w'$ in $P_w$ modulo $p$.
\end{thm}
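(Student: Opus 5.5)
Let $s=|w|$ and $s'=|w'|$. I will split into the three cases $s<s'$, $s=s'$ and $s>s'$, according to how the lengths compare.

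\emph{Case $s<s'$.} Corollary \ref{cor on vanishing of pairing} gives $\langle w,w'\rangle_n=0$. On the other side, Proposition \ref{permutations}(a) says that $P_w$ is homogeneous of degree $s$. Hence the coefficient of $w'$ in $P_w$ is $0$, and the two sides agree.

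\emph{Case $s=s'$.} Here $R=\dbZ/p^{n-s+1}$. Proposition \ref{Mag of powers of tau} with $k=n-s$ gives
\[
\Mag_R(\sig_{w,n})=1+p^{n-s}P_w+O(s+1).
\]
So $\eps_{w',R}(\sig_{w,n})=p^{n-s}(P_w)_{w'}\pmod{p^{n-s+1}}$, and applying $\iota_{n,s}$ yields $(P_w)_{w'}\bmod p$. If $s=n$, then $k=0$ and Proposition \ref{triangulation Efr17} applies directly.

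\emph{Case $s>s'$.} Here $w'\prec w$, so Proposition \ref{basic properties of the pairing}(a) gives $\langle w,w'\rangle_n=0$. Again $(P_w)_{w'}=0$ by homogeneity, so both sides vanish.

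The only step needing care is the equal-length computation, where one must check that the $O(s+1)$ terms cannot affect the coefficient of a word of length exactly $s$. This holds by the definition of $O(\cdot)$.
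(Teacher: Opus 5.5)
Your proof is correct and follows the paper's argument essentially step for step. The unequal-length cases go through the homogeneity of $P_w$ (Proposition \ref{permutations}(a)) together with Proposition \ref{basic properties of the pairing}(a) and Corollary \ref{cor on vanishing of pairing}, the equal-length case goes through Proposition \ref{Mag of powers of tau}, and your separate handling of $s=n$ via Proposition \ref{triangulation Efr17} (where $k=0$ is outside the stated hypothesis) is a small point the paper passes over silently.
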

\begin{proof}
First assume that $|w|\neq|w'|$.
Then $(P_w)_{w'}=0$, by Proposition \ref{permutations}(a).
If $|w'|<|w|$, then by Proposition \ref{basic properties of the pairing}(a), $\langle w,w'\rangle_n=0$.
If $|w|<|w'|$, then the same holds by Corollary \ref{cor on vanishing of pairing}.
This proves the theorem in this case.

Now consider the case where $s:=|w|=|w'|$.
Then, by Proposition \ref{Mag of powers of tau}, 
\[
\Mag_{\dbZ/p^{n-s+1}}(\tau_w^{p^{n-s}})=1+p^{n-s}P_w+O(s+1).
\]
It follows that
\[
\eps_{w',\dbZ/p^{n-s+1}}(\tau_w^{p^{n-s}})=p^{n-s}(P_w)_{w'}\!\!\!\pmod{p^{n-s+1}}\in p^{n-s}\dbZ/p^{n-s+1}.
\]
Hence, by (\ref{definition of pairing}),
\[
\langle w,w'\rangle_n=\iota_{n,s}(\eps_{w',\dbZ/p^{n-s+1}}(\tau_w^{p^{n-s}}))
=(P_w)_{w'}\pmod p.
\qedhere
\]
\end{proof}

\begin{cor}
\label{vanishing corollary}
If $w,w'\in \Lyn_{\leq n}(X)$ are not permutations of each other, then $\langle w,w'\rangle_n=0$.
\end{cor}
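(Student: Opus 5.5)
This follows directly from Theorem \ref{pairing as a coefficient} combined with Proposition \ref{permutations}(a). Let $w,w'\in\Lyn_{\leq n}(X)$ not be permutations of each other. By Theorem \ref{pairing as a coefficient}, $\langle w,w'\rangle_n$ is the image modulo $p$ of the integer coefficient $(P_w)_{w'}$ of the monomial $w'$ in $P_w\in\dbZ\langle X\rangle$. It therefore suffices to show that this integer coefficient is already $0$.

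By Proposition \ref{permutations}(a), $P_w$ is a $\dbZ$-linear combination of permutations of $w$. So every monomial with a nonzero coefficient in $P_w$ is a permutation of $w$. Since $w'$ is not a permutation of $w$, its coefficient vanishes, $(P_w)_{w'}=0$ in $\dbZ$, and hence $\langle w,w'\rangle_n=0$ in $\dbF_p$.

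Both words must lie in $\Lyn_{\leq n}(X)$, so that Theorem \ref{pairing as a coefficient} applies; this is exactly the hypothesis. The case $|w|\neq|w'|$ needs no separate treatment, since words of different lengths are never permutations of each other and are already covered by the same theorem. There is no real obstacle here. The substantive work, namely controlling the case $|w|<|w'|$ through Corollary \ref{cor on vanishing of pairing}, was already done in the proof of Theorem \ref{pairing as a coefficient}.
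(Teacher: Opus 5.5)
Your proof is correct and matches the paper's: it likewise deduces the corollary directly from Theorem~\ref{pairing as a coefficient} together with Proposition~\ref{permutations}(a). Your note that the case $|w|\neq|w'|$ is already handled inside that theorem's proof is also accurate.
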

\begin{proof}
This follows from Theorem \ref{pairing as a coefficient} and from Proposition \ref{permutations}(a).
\end{proof}

\begin{rem}
\rm
For Lyndon words $w$ and $w'$ which are permutations of one another, it is possible for $\langle w,w'\rangle_n$ to be nonzero as well as zero.
For instance, in \S\ref{section on the fundamental matrix of depth 4} we show that for $x,y,z\in X$ with $x<y<z$ one has
\[
\langle (xyz),(xzy)\rangle_3=-1, \quad \langle (xxyz),(xyxz)\rangle_4=0.
\]
\end{rem}

As an additional consequence of Theorem \ref{pairing as a coefficient} we obtain:

\begin{cor}
\label{indpendence of n}
For $w,w'\in\Lyn(X)$, the value of $\langle w,w'\rangle_n$ is independent of $n$, provided $n\geq|w|,|w'|$.
\end{cor}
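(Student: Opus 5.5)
The plan is to read the corollary off directly from Theorem~\ref{pairing as a coefficient}. Fix $w,w'\in\Lyn(X)$ and let $n$ be any integer with $n\geq|w|,|w'|$. Then $w,w'\in\Lyn_{\leq n}(X)$, so Theorem~\ref{pairing as a coefficient} applies and gives
\[
\langle w,w'\rangle_n=(P_w)_{w'}\pmod p .
\]

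The key observation is that the right-hand side does not involve $n$ at all. The polynomial $P_w\in\dbZ\langle X\rangle$ was defined in \S\ref{section on power series} purely recursively from the standard factorization of $w$. That factorization depends only on $w$ and the order on $X$, so the integer coefficient $(P_w)_{w'}$, and hence its reduction modulo $p$, is determined by $w$ and $w'$ alone. Consequently, for any two integers $n_1,n_2\geq\max\{|w|,|w'|\}$ we get $\langle w,w'\rangle_{n_1}=(P_w)_{w'}\bmod p=\langle w,w'\rangle_{n_2}$.

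There is essentially no obstacle here. The only point to check is that the hypothesis $n\geq|w|,|w'|$ is exactly what is needed for the pairing $\langle w,w'\rangle_n$ to be defined via (\ref{definition of pairing}) and for Theorem~\ref{pairing as a coefficient} to be applicable. All the $n$-dependence, namely the ring $\dbZ/p^{n-s+1}$, the power $p^{n-s}$ and the normalization $\iota_{n,s}$, has already been absorbed inside that theorem.
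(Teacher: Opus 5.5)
Your proposal is correct and is the same argument as the paper, which derives the corollary directly from Theorem~\ref{pairing as a coefficient}. The point in both is that $(P_w)_{w'}\bmod p$ is determined by $w$ and $w'$ alone through the recursive standard factorization, and the hypothesis $n\geq|w|,|w'|$ guarantees $w,w'\in\Lyn_{\leq n}(X)$, so the theorem applies for every such $n$.
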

Thus we may simply write $\langle w,w'\rangle$ for this pairing.

\section{The transgression pairing}
\label{section on the transgression pairing}
\subsection{Cohomological constructions}
Let $G$ be a profinite group and $K$ a closed normal subgroup of $G$.
We identify $H^1(K,\dbF_p)$ with $\Hom_{\rm cont}(K,\dbF_p)$ and let $G$ act on $H^1(K,\dbF_p)$ as usual by $({}^g\psi)(k)=\psi(g\inv kg)$ for $g\in G$, $k\in K$ and $\psi\in H^1(K,\dbF_p)$.
There is a well-defined substitution pairing
\[
K/K^p[G,K]\times H^1(K,\dbF_p)^G\to\dbF_p, \quad (\bar k,\psi)\mapsto \psi(k)
\]
which is non-degenerate, i.e., its left and right kernels are trivial \cite{EfratMinac11}*{Cor.\ 2.2}.
In particular, take $K=G^{(n,p)}$ to be the $n$th term of the lower $p$-central filtration of $G$.
We obtain a non-degenerate bilinear map 
\[
G^{(n,p)}/G^{(n+1,p)}\times H^1(G^{(n,p)},\dbF_p)^G\to\dbF_p.
\]

Now let $S$ be as before the free profinite group on the finite basis $X$ and consider the exact five term sequence of cohomology groups
\[
\begin{split}
0\to H^1(S/S^{(n,p)},\dbF_p)&\xrightarrow{\inf}H^1(S,\dbF_p)\xrightarrow{\res} H^1(S^{(n,p)},\dbF_p)^S \\
&\xrightarrow{\trg} H^2(S/S^{(n,p)},\dbF_p)\xrightarrow{\inf} H^2(S,\dbF_p)
\end{split}
\]
\cite{NeukirchSchmidtWingberg}*{Prop.\ 1.6.7}.
When $n\geq2$ we have $S^{(n,p)}\leq S^{(2,p)}=S^p[S,S]$, so the first map is an isomorphism.
Further, $H^2(S,\dbF_p)=0$ \cite{NeukirchSchmidtWingberg}*{Th.\ 3.5.6}.
Therefore the transgression map $\trg$ above is an isomorphism.
We obtain the bilinear \textsl{transgression pairing}
\[
(\cdot,\cdot)_n\colon S^{(n,p)}\times H^2(S/S^{(n,p)},\dbF_p)\to\dbF_p, \quad
(\sig,\alp)_n=-(\trg\inv(\alp))(\sig)
\]
for $\sig\in S^{(n,p)}$.
Its left kernel is $S^{(n+1,p)}$ and its right kernel is trivial.

\subsection{The fundamental matrix}
The pairing $(\cdot,\cdot)_n$ is connected to the pairing $\langle\cdot,\cdot\rangle_n$ of \S\ref{section on the pairing} as follows:

\begin{thm}
[\cite{Efrat17}*{Cor.\ 8.2}]
\label{connection between the pairings}
Let $w,w'$ be words in $X^*$ of lengths $1\leq |w|,|w'|\leq n$, respectively, with $w$ Lyndon.
Then
\[
(\sig_{w,n},\alp_{w',n})_n=\langle w,w'\rangle_n.
\]
\end{thm}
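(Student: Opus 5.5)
The statement is quoted from \cite{Efrat17}*{Cor.\ 8.2}, so the plan is to reconstruct its proof from the definitions in \S\ref{section on the Magnus homomorphism} and the transgression formalism above. Write $s'=|w'|$, $R=\dbZ/p^{n-s'+1}$, and $\bar S=S/S^{(n,p)}$. By definition, $\alp_{w',n}=\bar\rho_{w'}^*\omega_{s',n}$, where $\omega_{s',n}$ is the class of the central extension $0\to\dbF_p\to\dbU_{s'}(R)\to\overline{\dbU}_{s'}(R)\to1$. Its pullback along $\bar\rho_{w'}$ is the fibre product extension
\[
0\to\dbF_p\to E\to\bar S\to1, \qquad E=\bar S\times_{\overline{\dbU}_{s'}(R)}\dbU_{s'}(R).
\]
The map $\rho_{w'}\colon S\to\dbU_{s'}(R)$, together with the projection $S\to\bar S$, gives a lift $\hat\rho\colon S\to E$ of the quotient map $S\to\bar S$.

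The key step is a standard lemma on transgression. Suppose a class $\alp\in H^2(\bar S,\dbF_p)$ is represented by a central extension $E$, and $\hat\rho\colon S\to E$ lifts $S\to\bar S$. Then $\hat\rho$ maps $S^{(n,p)}$ into the kernel $\dbF_p$, and its restriction is a homomorphism $\psi\in H^1(S^{(n,p)},\dbF_p)^S$ with $\trg(\psi)=\pm\alp$. One proves this with the explicit description of transgression on cocycles (\cite{NeukirchSchmidtWingberg}*{Prop.\ 1.6.6}). Choose a set-theoretic section $\bar S\to E$. The corresponding 2-cocycle $c$ of the extension inflates to $S$ as the coboundary of the function $S\to\dbF_p$ measuring the difference between $\hat\rho$ and the section. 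The restriction of this function to $S^{(n,p)}$ is exactly $\psi$. The sign depends on conventions, and the minus sign in the definition of $(\cdot,\cdot)_n$ is presumably there to absorb it. So, up to fixing this sign once and for all, $(\sig,\alp_{w',n})_n$ is the image of $\sig\in S^{(n,p)}$ in $\dbU_{s'}(R)^{(n,p)}\isom\dbF_p$ under $\rho_{w'}$.

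Finally, take $\sig=\sig_{w,n}\in S^{(n,p)}$. Then $\rho_{w'}(\sig)$ lies in $\dbU_{s'}(R)^{(n,p)}$, so it has the form $I+ap^{n-s'}E_{1,s'+1}$. Its $(1,s'+1)$-entry is $\eps_{w',R}(\sig)$, by the definition of $\rho_{w'}$. The identification $\dbU_{s'}(R)^{(n,p)}\isom\dbF_p$ used to define $\omega_{s',n}$ is precisely $\iota_{n,s'}$. Hence the value is $\iota_{n,s'}(\eps_{w',R}(\sig_{w,n}))=\langle w,w'\rangle_n$, by (\ref{definition of pairing}).

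The main obstacle is the sign and normalization bookkeeping in the transgression lemma. This means checking the conventions for the cocycle $c(g,h)$ attached to a central extension and for $\trg$ on cochains, so that the minus sign in $(\cdot,\cdot)_n$ gives exactly $+\langle w,w'\rangle_n$. I would first verify the sign in the simplest case $n=2$, $w=w'=(xy)$, where both sides must equal $1$ by Proposition \ref{basic properties of the pairing}(b) and the classical Labute--Serre computation, and then fix the convention accordingly.
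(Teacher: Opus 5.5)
\textbf{Approach.} Your outline is the same argument that underlies the cited result \cite{Efrat17}*{Cor.\ 8.2}; the paper itself gives no proof and only cites it. The steps are: pull back the central extension defining $\omega_{s',n}$ along $\bar\rho_{w'}$; lift the quotient map via $\rho_{w'}$; identify $\trg$ of the restricted representation with the pulled-back class, using the cocycle description of transgression; and read off the $(1,s'+1)$-entry via $\iota_{n,s'}$. These structural steps are all fine. The identification $\dbU_{s'}(R)^{(n,p)}\isom\dbF_p$ enters $\omega_{s',n}$ and $\psi$ in the same way, as you implicitly use.

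\textbf{The gap: the sign.} You leave the sign open. Since everything else is formal, the sign is the actual content of the identity. Your plan to calibrate it at $n=2$, $w=w'=(xy)$ does not work well here, for two reasons.
\begin{itemize}
\item In this paper the $n=2$ duality, in particular $([x,y],\varphi_x\cup\varphi_y)_2=1$, is obtained by combining Proposition~\ref{basic properties of the pairing}(b) and Theorem~A with the very statement you are proving. Using it is therefore circular.
\item If you instead import Labute's formula from \cite{NeukirchSchmidtWingberg}*{Ch.\ III, \S9}, you still need the sign in $\alp_{(xy),2}=\pm\varphi_x\cup\varphi_y$. That is a cocycle computation with exactly the same conventions.
\end{itemize}

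\textbf{How to close it.} Do the computation directly.
\begin{itemize}
\item Choose a section $s$ of $E\to\bar S$ with $s(1)=1$. Let $c(\bar g,\bar h)=s(\bar g)s(\bar h)s(\bar g\bar h)\inv$ represent the class of $E$, as in \cite{NeukirchSchmidtWingberg}*{Th.\ 1.2.4}.
\item Set $f(g)=\hat\rho(g)s(\bar g)\inv\in\dbF_p$. Since $\dbF_p$ is central, $\hat\rho(gh)=\hat\rho(g)\hat\rho(h)$ gives $f(gh)=f(g)+f(h)+c(\bar g,\bar h)$, i.e.\ $\partial f=-\inf(c)$. Also $f|_{S^{(n,p)}}=\psi$, because $s(1)=1$.
\item By the description of transgression in \cite{NeukirchSchmidtWingberg}*{Prop.\ 1.6.6} ($\trg[\psi]=[c']$ whenever $f$ extends $\psi$ and $\partial f=\inf(c')$), we get $\trg(\psi)=-\alp_{w',n}$.
\item Hence $(\sig,\alp_{w',n})_n=-(\trg\inv(\alp_{w',n}))(\sig)=\psi(\sig)=\iota_{n,s'}(\eps_{w',R}(\sig))$, with no residual sign.
\end{itemize}
This confirms your guess that the minus sign in the definition of $(\cdot,\cdot)_n$ is there precisely to absorb this sign, and it completes your argument.
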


\medskip

\begin{proof}[Proof of Theorem A]
This follows from Theorem \ref{connection between the pairings} and Corollary \ref{vanishing corollary}.
\end{proof}

In view of Theorems \ref{profinite base} and \ref{cohomological base}, the \textsl{fundamental matrix} $[(\sig_{w,n},\alp_{w',n})_n]_{w,w'}$ \textsl{of depth $n$}, where $w,w'\in \Lyn_{\leq n}(X)$, encodes the full information about the transgression pairing $(\cdot,\cdot)_n$.
To make it well-defined one needs to choose an order on the index set $\Lyn_{\leq n}(X)$. \cite{Efrat17} uses the length-lexicographic order, making the matrix unipotent upper triangular (by Proposition \ref{basic properties of the pairing} and Theorem \ref{connection between the pairings}).
However, in light of Theorem A, it is more natural to order the words in $\Lyn_{\leq n}(X)$ with respect to the following refined lexicographic order:
First order the permutation classes according to the length-lexicographic order of their lexicographically least elements. 
Within each permutation class, use the lexicographic order.

\begin{cor}
Under this order on $\Lyn_{\leq n}(X)$, the fundamental matrix $[(\sig_{w,n},\alp_{w',n})_n]_{w,w'}$ is block diagonal, with one diagonal block for each permutation class, and each diagonal block is unipotent upper triangular. 
\end{cor}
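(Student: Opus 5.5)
The approach is to translate the matrix entries into the combinatorial pairing and then read off the structure. By Theorem \ref{connection between the pairings}, every entry of the fundamental matrix is $(\sig_{w,n},\alp_{w',n})_n=\langle w,w'\rangle_n$. So it suffices to understand $\langle w,w'\rangle_n$ for $w,w'\in\Lyn_{\leq n}(X)$ under the refined lexicographic order. The permutation classes are listed consecutively in this order, so the index set decomposes into consecutive intervals, one for each class.

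\textbf{Off-diagonal blocks.} Let $w,w'$ lie in different permutation classes. Then they are not permutations of one another, so $\langle w,w'\rangle_n=0$ by Corollary \ref{vanishing corollary} (equivalently, by Theorem A). Hence all entries outside the diagonal blocks vanish, and the matrix is block diagonal.

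\textbf{Diagonal blocks.} Within a single block, $w$ and $w'$ are permutations of each other, so $|w|=|w'|$. On words of equal length, the length-lexicographic order $\preceq$ coincides with $\leq_{\rm lex}$, and $\leq_{\rm lex}$ is exactly the order used inside each block. Consequently:
\begin{itemize}
\item if $w'$ precedes $w$ in the block, then $w'\prec w$, and Proposition \ref{basic properties of the pairing}(a) gives $\langle w,w'\rangle_n=0$;
\item on the diagonal, Proposition \ref{basic properties of the pairing}(b) gives $\langle w,w\rangle_n=1$.
\end{itemize}
With rows indexed by $w$ and columns by $w'$, entries below the diagonal are zero and diagonal entries are $1$, so each block is unipotent upper triangular.

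The only point needing care is that the refined order really does make each class a consecutive interval and restricts to $\leq_{\rm lex}$ within it. Both facts hold by construction. The substantive input is entirely Corollary \ref{vanishing corollary}, so there is no real obstacle here.
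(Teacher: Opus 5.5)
Your proposal is correct and follows essentially the same route as the paper. The off-block entries vanish by Theorem A (that is, Theorem \ref{connection between the pairings} together with Corollary \ref{vanishing corollary}). Within a class, the refined order agrees with $\preceq$ on words of equal length, so Proposition \ref{basic properties of the pairing}(a),(b) gives the unipotent upper triangular shape of each block.
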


\begin{proof}[Proof of Theorem B]
This follows from Theorem \ref{connection between the pairings} and Corollary \ref{indpendence of n}.
\end{proof}

\begin{rem}
\rm
If $w=(x_1\cdots x_s)$ is Lyndon, then $x_1=\min(\Supp(w))$.
Therefore there are at most $(s-1)!$ permutations of $w$ which are Lyndon.

Further, let $x_1<\cdots<x_s$ in $X$ and let $v$ be any permutation of $(x_2\cdots x_s)$.
Then the concatenation $(x_1)v$ is a permutation of $w=(x_1x_2\cdots x_s)$ and is Lyndon.
Therefore the permutation class of $w$ consists of $(s-1)!$ Lyndon words.
Thus, when $|X|\geq n$, the fundamental matrix has a diagonal block of maximal size $(n-1)!\times(n-1)!$.  
\end{rem}

\section{The fundamental matrix of depth $4$}
\label{section on the fundamental matrix of depth 4}
The Lyndon words of length $\leq4$ were listed in Example \ref{Lyndon words of length at most 4}.
We compute the corresponding blocks of the fundamental matrix.
Here $x<y<z<t$ and we do not distinguish between words of the same pattern, e.g., $(xxyz)$ and $(yyzt)$.
Permutation classes of size $1$ give the block $[1]$.
The permutation classes containing more than one Lyndon word are the following classes $C_1,C_2,C_3,C_4,C_5$.
We order them lexicographically, writing each word in its iterative standard factorization.
We write down the polynomials $P_w$ needed for the computation and the  matrix block $B_i$ of $C_i$.

\goodbreak
\bigskip

\hrule

\medskip

$C_1=\{(x(yz)), ((xz)y)\}$

$P_{(xyz)}=xyz-xzy-yzx+zyx$

\[
B_1=\begin{bmatrix}
1&-1\\
0&\phm1\\
\end{bmatrix}
\]

\medskip

\hrule

\medskip

$C_2=\{(x(x(yz))),(x((xz)y)),((xy)(xz))\}$  

$P_{(xxyz)}=xxyz-xxzy-2xyzx+2xzyx+yzxx-zyxx$

$P_{(xxzy)}=xxzy-xyxz+xyzx-xzxy-xzyx+yxzx-yzxx+zxyx$

\[
B_2=\begin{bmatrix}
1&-1&\phm0\\
0&\phm1&-1\\
0&\phm0&\phm1
\end{bmatrix}
\]

\medskip

\hrule

\medskip

$C_3=\{(x(y(yz))),((x(yz))y),(((xz)y)y)\}$

$P_{(xyyz)}=xyyz-2xyzy+xzyy-yyzx+2yzyx-zyyx$  

$P_{(xyzy)}=xyzy-xzyy-yxyz+yxzy+yyzx-yzxy-yzyx+zyxy$ 

\[
B_3=\begin{bmatrix}
1&-2&\phm1\\
0&\phm1&-1\\
0&\phm0&\phm1
\end{bmatrix}
\]

\medskip

\hrule

\medskip

$C_4=\{(x((yz)z)), ((xz)(yz)), (((xz)z)y)\}$

$P_{(xyzz)}=xyzz-2xzyz+xzzy-yzzx+2zyzx-zzyx$ 

$P_{(xzyz)}=xzyz-xzzy-yzxz+yzzx-zxyz+zxzy+zyxz-zyzx$

\[
B_4=\begin{bmatrix}
1&-2&\phm1\\
0&\phm1&-1\\
0&\phm0&\phm1
\end{bmatrix}
\]
\medskip

\hrule

\medskip

$C_5=\{(x(y(zt))),(x((yt)z)),((xz)(yt)),((x(zt))y),((xt)(yz)),(((xt)z)y)\}$

$P_{(xyzt)}=xyzt-xytz-xzty+xtzy-yztx+ytzx+ztyx-tzyx$

$P_{(xytz)}=xytz-xzyt+xzty-xtyz-ytzx+zytx-ztyx+tyzx$ 

$P_{(xzyt)}=xzyt-xzty-ytxz+ytzx-zxyt+zxty+tyxz-tyzx$

$P_{(xzty)}=xzty-xtzy-yxzt+yxtz+yztx-ytzx-ztxy+tzxy$

$P_{(xtyz)}=xtyz-xtzy-yzxt+yztx+zyxt-zytx-txyz+txzy$

\[
B_5=\begin{bmatrix}
1&-1&\phm0&-1&\phm0&\phm1\\
0&\phm1&-1&\phm1&-1&\phm0\\
0&\phm0&\phm1&-1&\phm0&\phm0\\
0&\phm0&\phm0&\phm1&\phm0&-1\\
0&\phm0&\phm0&\phm0&\phm1&-1\\
0&\phm0&\phm0&\phm0&\phm0&\phm1
\end{bmatrix}
\]

\medskip

\hrule


\begin{bibdiv}
\begin{biblist}



\bib{Efrat17}{article}{
   author={Efrat, Ido},
   title={The cohomology of canonical quotients of free groups and Lyndon words},
   journal={Doc. Math.},
   volume={22},
   date={2017},
   pages={973--997},
%   issn={1431-0635},
%   review={\MR{3665398}},
}

\bib{Efrat20}{article}{
   author={Efrat, Ido},
   title={The lower $p$-central series of a free profinite group and the
   shuffle algebra},
   journal={J. Pure Appl. Algebra},
   volume={224},
   date={2020},
%   number={6},
   pages={106260, 13},
%   issn={0022-4049},
%   review={\MR{4048520}},
%   doi={10.1016/j.jpaa.2019.106260},
}

\bib{Efrat23}{article}{
   author={Efrat, Ido},
   title={The $p$-Zassenhaus filtration of a free profinite group and shuffle relations},
   journal={J. Inst. Math. Jussieu},
   volume={22},
   date={2023},
   number={2},
   pages={961--983},
%   issn={1474-7480},
%   review={\MR{4557910}},
%   doi={10.1017/S1474748021000426},
}



\bib{Efrat24}{article}{
   author={Efrat, Ido},
   title={Cohomology and the combinatorics of words for Magnus formations},
   journal={New York J. Math.},
   volume={30},
   date={2024},
   pages={1177--1195},
%   review={\MR{4791059}},
}

	

\bib{Efrat26}{article}{
author={Efrat, Ido},
title={Mild pro-p groups and ordered monoids},
date={2026},
eprint={https://arxiv.org/abs/2604.25789},
}


\bib{EfratMinac11}{article}{
   author={Efrat, Ido},
   author={Min\'a\v c, J\'an},
   title={On the descending central sequence of absolute Galois groups},
   journal={Amer. J. Math.},
   volume={133},
   date={2011},
   number={6},
   pages={1503--1532},
}




\bib{FriedJarden08}{book}{
   author={Fried, Michael D.},
   author={Jarden, Moshe},
   title={Field Arithmetic},
   edition={4},
   publisher={Springer, Cham},
   date={2023},
   pages={xxiv+792},
}



\bib{HamzaMaireMinacTan25}{article}{
   author={Hamza, Oussama},
   author={Min\'a\v c, Jan},
   author={Maire, Christian},
   author={T\^an, N.D.},
   title={Maximal 2-extensions of Pythagorean fields and Right Angled Artin Groups},
   date={2025},
   eprint={arXiv:2510.11970},
   }


\bib{Koch60}{article}{
    author={Koch, H.},
     title={\"Uber die Faktorgruppen einer absteigenden Zentralreihe},
     journal={Math.\ Nachr.},
     volume={22},
     date={1960},
     pages={159\ndash161},
}


\bib{Labute67}{article}{
author={Labute, John},
title={Classification of Demushkin groups},
journal={Canad.\ J.\ Math.},
volume={19},
date={1967},
pages={106\ndash132},
}


   
\bib{MinacPasiniQuadrelliTan21}{article}{
   author={Min\'a\v c, Jan},
   author={Pasini, Federico William},
   author={Quadrelli, Claudio},
   author={T\^an, N.D.},
   title={Koszul algebras and quadratic duals in Galois cohomology},
   journal={Adv. Math.},
   volume={380},
   date={2021},
   pages={Paper No. 107569, 49},
%   issn={0001-8708},
%   review={\MR{4200471}},
%   doi={10.1016/j.aim.2021.107569},
}

\bib{MinacPasiniQuadrelliTan22}{article}{
   author={Min\'a\v c, J.},
   author={Pasini, F. W.},
   author={Quadrelli, C.},
   author={T\^an, N. D.},
   title={Mild pro-$p$ groups and the Koszulity conjectures},
   journal={Expo. Math.},
   volume={40},
   date={2022},
   number={3},
   pages={432--455},
%   issn={0723-0869},
%   review={\MR{4475389}},
%   doi={10.1016/j.exmath.2022.03.004},
}


\bib{MinacTan17}{article}{
   author={Min\'a\v c, J\'an},
   author={T\^an, N.D.},
   title={Triple Massey products and Galois theory},
   journal={J. Eur. Math. Soc. (JEMS)},
   volume={19},
   date={2017},
   number={1},
   pages={255--284},
%   issn={1435-9855},
%   review={\MR{3584563}},
%   doi={10.4171/JEMS/665},
}




\bib{NeukirchSchmidtWingberg}{book}{
  author={Neukirch, J{\"u}rgen},
  author={Schmidt, Alexander},
  author={Wingberg, Kay},
  title={Cohomology of Number Fields, Second edition},
  publisher={Springer},
  place={Berlin},
  date={2008},
}



\bib{Quadrelli24}{article}{
   author={Quadrelli, Claudio},
   title={Massey products in Galois cohomology and the elementary type    conjecture},
   journal={J. Number Theory},
   volume={258},
   date={2024},
   pages={40--65},
%   issn={0022-314X},
%   review={\MR{4685899}},
%   doi={10.1016/j.jnt.2023.11.002},
}



\bib{Reutenauer93}{book}{
   author={Reutenauer, Christophe},
   title={Free Lie Algebras},
   series={London Mathematical Society Monographs. New Series},
   volume={7},
   note={Oxford Science Publications},
   publisher={The Clarendon Press, Oxford University Press, New York},
   date={1993},
   pages={xviii+269},
}


\bib{SerreDemuskin}{article}{
   author={Serre, Jean-Pierre},
   title={Structure de certains pro-$p$-groupes (d'apr\`es Demu\v skin)},
   conference={
      title={S\'eminaire Bourbaki (1962/63), Exp.\ 252},
   },
   label={Ser63},
}





\bib{VogelThesis}{thesis}{
author={Vogel, Denis},
title={Massey products in the Galois cohomology of number fields},
type={Ph.D.\ thesis},
place={Universit\"at Heidelberg},
date={2004},
}



\end{biblist}
\end{bibdiv}
\end{document}